\newcommand{\Real}[0]{\mathrm{\hspace{0.1mm}I\hspace{-0.8mm}R}}
\newcommand{\Natural}{\mathbbm{N}}
\newcommand{\kernel}{\mathrm{kern}}
\newcommand{\argmin}{\mathrm{argmin}}
\newcommand{\supp}{\mathrm{supp}}
\newcolumntype{L}[1]{>{\raggedright\let\newline\\\arraybackslash\hspace{0pt}}m{#1}}
\newcolumntype{C}[1]{>{\centering\let\newline\\\arraybackslash\hspace{0pt}}m{#1}}
\newcolumntype{R}[1]{>{\raggedleft\let\newline\\\arraybackslash\hspace{0pt}}m{#1}}
\begin{document}   

\title{On the Number of Degenerate Simplex Pivots}
%
%
\author{Kirill Kukharenko\inst{1} \and
Laura Sanit\`a\inst{2}}
\authorrunning{K. Kukharenko and L. Sanit\`a}
%
\institute{Otto von Guericke University Magdeburg, Magdeburg, Germany \\
\email{kirill.kukharenko@ovgu.de}\and
Bocconi University, Milan, Italy\\
\email{laura.sanita@unibocconi.it}}
\maketitle   

\begin{abstract}
The simplex algorithm is one of the most popular algorithms to solve linear programs (LPs).
Starting at an extreme point solution of an LP, it performs a sequence of basis exchanges (called pivots) that 
allows one to move to a better extreme point along an improving edge-direction of the underlying polyhedron. 

A key issue in the simplex algorithm's performance is \emph{degeneracy}, which may lead to a 
(potentially long) sequence of basis exchanges which do not change the current extreme point solution.   
 
In this paper, we prove that 
\textcolor{black}{
one can
employ any improving feasible direction at an extreme point}
to limit the number of consecutive degenerate pivots that the simplex algorithm performs to $n-m-1$, where $n$ is the number of variables and $m$ is the number of equality constraints of a given LP in standard equality form.

\keywords{Simplex Algorithm  \and Linear Programming \and Degeneracy.}
\end{abstract}
\section{Introduction}

The simplex algorithm first introduced in \cite{dantzig1951} has always been one of the most widely used algorithms for solving linear programming problems (LPs). Although it performs as a linear
time algorithm for many real world linear programming models (see, e.g., the survey \cite{shamir1987}) and a variant of it (the shadow simplex) has been shown to have polynomial smoothed complexity (see, e.g., \cite{Borgwardt87,Borgwardt99,dadush2020,Megiddo1986,spielman04,vershynin09}), no polynomial version of the simplex algorithm has been found until these days, despite decades of research. 


The simplex algorithm relies on the concept of \emph{basis}, where a basis corresponds to an inclusion-wise minimal set of tight constraints which defines an extreme point solution of the underlying linear program. In each step, it performs a basis exchange by replacing one constraint currently in the basis with
a different one \textcolor{black}{that is not in the basis}.  Such an exchange is called a \emph{pivot}. From a geometric perspective, a basis exchange identifies a direction to move from the current extreme point. The simplex algorithm only considers pivots that yield an \emph{improving} direction, with respect to the objective function to be optimized.
As the result of pivoting, it is possible for the algorithm to either stay at the same extreme point, or to move to an adjacent extreme point of the underlying polyhedron. We refer to a pivot of the former type as \emph{degenerate}, in contrast to a pivot of the latter type, which will be called \emph{non-degenerate}. The \emph{pivot rule} which determines how to perform these basis exchanges (i.e., deciding which inequality enters and which inequality leaves the basis) is the core of the simplex algorithm. Various pivot rules have been introduced over decades, but none of them yields a polynomial time version of the simplex algorithm so far~(see, e.g., \cite{AvisFriedmann2017,Disser-Hopp19,Goldfarb94,GOLDFARB+sit1979,Hansen+Zwick2015,terlaky+zhang,kleeminty,zadeh2} and the references therein). 

When analyzing the performance of the simplex algorithm, one can often assume that the underlying polyhedron $\mathcal P$ is non degenerate, e.g., by performing a small perturbation of the constraint's right-hand-side. In this case, a necessary condition to guarantee an efficient performance of the simplex algorithm, is that there is always a way to reach an optimal solution within a polynomial number of non-degenerate pivots. Geometrically, this would imply that there is always a path between two extreme points on the 1-skeleton of $\mathcal P$ (the graph where vertices correspond to the extreme points of $\mathcal P$, and edges correspond to the 1-dimensional faces of $\mathcal P$), whose length is bounded by a polynomial in the input description of $\mathcal P$. This is a major and still open question in the field, \textcolor{black}{commonly referred to as the polynomial Hirsch Conjecture} (see for example \cite{disumma2014,dadush16,@matschke2015,santos2010,todd2014}).

On the other hand, even in the case when a short path exists, it is not clear how the simplex algorithm would be bound to follow it (see for instance the case of $0/1$-polytopes discussed in~\cite{black2021}, \textcolor{black}{or the hardness results in~\cite{DKS22,CarSte}}). In fact, given any LP one can apply some transformation to it to ensure the existence of a short path between an initial extreme point solution and an optimal one in the underlying polyhedron (such as, e.g., relying on an extended formulation with one extra dimension to construct a pyramid whose apex is adjacent to all extreme points of the initial polyhedron). Such standard transformations usually make the polyhedron become highly degenerate. 
In the presence of degeneracy, an extreme point might have exponentially many distinct bases defining it, and the simplex algorithm might perform an exponential number of consecutive degenerate pivots, staying in the same vertex, (see, e.g., \cite{cunningham79}). Such behavior is referred to as \emph{stalling}. In some pathological cases, certain pivot rules might even provide an infinite sequence of consecutive degenerate pivots at a degenerate vertex, a phenomenon called \emph{cycling}. Although cycling can be easily avoided by employing, for instance, Bland's rule \cite{bland77} or a lexicographic rule~\cite{Terlaky2009}, there is no known pivot rule that prevents stalling. As stated in several papers (e.g.,~\cite{Megiddo86degen,Murty2009}), it is well known that solving a general LP in strongly polynomial time can be reduced to finding a pivot rule that prevents stalling in general polyhedra, \textcolor{black}{and that can be implemented in strongly polynomial time}.
However, there are a few cases in which it is known that the issue of stalling can be handled. The most famous example is the class of transportation polytopes, for which pivot rules with polynomial bounds on the number of consecutive degenerate pivots were introduced in~\cite{cunningham79} and further developed in \cite{orlin02,goldfarb90,Rooley-Laleh81}. See also the work of~\cite{Orlin97} for a strongly polynomial version of the primal network simplex. 

Our findings are inspired by~\cite{Kabadi2008} to a great extent. The latter work shows that, given an LP in standard equality form with a totally unimodular coefficient matrix $A \in \mathbb \{-1,0,1\}^{m \times n}$, a vertex solution $x$ and an improving feasible \emph{circuit direction} for $x$, one can construct a pivot rule which performs at most $m$ consecutive degenerate pivots. We generalize their proof, and show that one can employ \emph{any} improving feasible direction at a vertex $x$ of a \emph{general} polyhedron, to avoid stalling.  Our result, proved in Section~\ref{sec:antistalling}, is summarized in the following theorem.

\begin{theorem}\label{th:pivot-rule}
    Given any LP of the form $\min\{c^T x: Ax=b, x\geq 0\}$ with $A \in \Real^{m \times n}$, there exists a pivot rule that limits the number of consecutive degenerate simplex pivots at any non-optimal extreme solution to at most $n-m-1$. 
    
    \textcolor{black}{The pivot rule can be efficiently implemented whenever an improving feasible direction at a non-optimal extreme point is available.}
\end{theorem}

We stress here that the pivots considered in Theorem~\ref{th:pivot-rule} are degenerate \emph{simplex} pivots, meaning that each of them yields an \emph{improving} direction at the given extreme point, though this direction is not feasible. It is important to point this out as in general, given two adjacent extreme points $x, x'$, one can easily perform a sequence of basis exchanges that yields $x'$ from $x$ by identifying the common tight linearly independent constraints, and introducing each of them to the current basis in any order, until the direction $x'-x$ is seen. However, we here want a strategy that guarantees that \emph{each} basis exchange \textcolor{black}{defines an improving direction, and hence is realizable by the simplex algorithm.}    

We then discuss some byproducts of our result in Section~\ref{sec:applications}. In particular, we revise the analysis of the simplex algorithm by Kitahara and Mizuno \cite{kitahara2013,kitahara2014} who bound the number of non-degenerate pivots in terms of $n,m$ and the maximum and the minimum non-zero coordinate of a basic feasible solution. Their analysis combined with our degeneracy-escaping technique show that the \emph{total} number of simplex pivots (both degenerate and non-degenerate) can be bounded in a similar way. 
As a consequence, one can have a strongly-polynomial number of simplex pivots for several combinatorial LPs. 
In addition, we perform some computational experiments to
evaluate the performance of the antistalling pivot rule in practice, reported in Section~\ref{sec:comp}.

Of course the drawback of the whole machinery is that it requires an improving feasible direction at a given vertex. Though it is efficiently computable, this is in general as hard as solving an LP. 
For some classes of polytopes though (e.g., matching or flow polytopes) finding such a direction can be easier, thus making it worthwhile to apply our pivot rule. Most importantly, we think that the main importance of our result is from a theoretical perspective: it shows that, for several polytopes, not only a short path on the 1-skeleton exists, but a short sequence of \emph{simplex pivots} always exists (and can also be efficiently computed). That is, a short sequence of \emph{basis exchanges} that follow \emph{improving} directions, when performing \emph{both} degenerate and non-degenerate pivots.

\section{Antistalling pivot rule}
\label{sec:antistalling}

The goal of this section is to prove Theorem~\ref{th:pivot-rule}. Before doing that, we state some preliminaries, and also give 
a geometric intuition behind the result.

\subsection{Preliminaries}
The simplex algorithm works with LPs in standard equality form:

\begin{equation}\label{eq:LP}
   \begin{array}{rll}
       \min & c^Tx & \\
        \mbox{s.t.} & Ax  &= b\\
        & x &\ge 0
   \end{array}
\end{equation}

For $A \in \Real^{m \times n}$ and $B \subseteq [n] \textcolor{black}{:=\{1,2, \dots, n\}}$, we use $A_B$ to denote the submatrix of $A$ formed by the columns indexed by $B$. 
Similarly, for $x\in \Real^n$ and $B\in [n]$, the notation $x_B$ is used to denote a vector consisting of the  entries of $x$ indexed by $B$. 
A \emph{basis} of \eqref{eq:LP} is a subset $B \subseteq [n]$ with $|B| = m$ and $A_B$ being non-singular. 
The point $x$ with $x_B = A_B^{-1}b$, $x_{N} = 0$ where $N:=[n]\setminus B$ is a \emph{basic solution} of \eqref{eq:LP} with basis $B$. If additionally $x_B \ge 0$, both the basic solution $x$ and the basis $B$ are \emph{feasible}. If $x_i > 0$ for all $i \in B$, then $B$ and $x$ are called \emph{non-degenerate}. \textcolor{black}{If instead $x_i=0$ for some $i \in B$, then $B$ and $x$ are called \emph{degenerate}}. We let $\overline{A} := A_B^{-1}A_N$ and $\overline{c}_N^T := c_N^T - c_B^T \overline{A}$. In particular, $\overline{c}_N\in \Real^N$ is the vector of so-called \emph{reduced costs} for the basis $B$.  
The coordinates of the reduced cost vector will be addressed as $\overline{c}_{N,i}$, where the first subscript will be dropped if the basis $B = [n] \setminus N$ is clear from the context. 

The simplex algorithm considers in each iteration a feasible $B$. If all elements of $\overline{c}_N$ are non-negative, then the basis $B$ and the corresponding basic feasible solution $x$ are known to be optimal. Otherwise, the algorithm \emph{pivots} by choosing a non-basic coordinate with negative reduced cost to enter the basis, say $f$. It then performs a minimum ratio test to compute an index $i$ that minimizes $\frac{x_i}{\overline{A}_{if}}$
among all indices $i \in B$ for which $\overline{A}_{if} >0$. Such index $i$ will be the  one leaving the basis, and it corresponds
to a basic coordinate which hits its non-negativity bound first when moving along the direction given by the tight constraints indexed by $B\setminus \{f\}$. 
At each iteration there could be multiple candidate indices for entering the basis (all the ones with negative reduced cost), as well as multiple candidate indices to leave the basis (all the ones for which the minimum ratio test value is attained). The choice of the entering and the leaving coordinates is the essence of a pivot rule (see~\cite{terlaky+zhang} for
a survey). A pivot is \emph{degenerate} if the attained minimum ratio test value is 0 (hence, the extreme point solution $x$ does not change), and \emph{non-degenerate} otherwise. Note that only coordinates with negative reduced cost are considered for entering the basis, since only such a choice guarantees the simplex algorithm to make progress when a non-degenerate pivot occurs. To later emphasize that we only refer to that kind of \emph{improving} pivots, we will call them \emph{simplex pivots}.

Finally, we let $\kernel(A):= \{y \in \Real^n: A y =0\}$. Given~\eqref{eq:LP}, we call a vector $y \in \kernel(A)$ with $c^Ty < 0$ an \emph{improving direction}. Such $y$ is said to be \emph{feasible} at a basic feasible solution $x$ if $x+\varepsilon y \ge 0$ for sufficiently small positive $\varepsilon$. Note that for any $y \in \kernel(A)$ and any basis $B$ of $A$, the following holds:
\begin{equation}\label{eq:basis_decomp}
    c^Ty= c^T_By_B + c^T_Ny_N = c^T_B(-\overline{A}y_N) + c^T_Ny_N = \overline{c}_N^Ty_N
\end{equation}


\subsection{A geometric intuition}

Before providing a formal proof, we give a geometric intuition on how our pivot rule works. For this, it will be easier to abandon the standard equality form. In particular, consider a degenerate vertex $x \in \Real^d$ of a polytope $\mathcal P$ of dimension $d \in \Natural$ defined by inequalities $a_i^Tx \le b_i$ with $a_i \in \Real^d, b_i \in \Real, i \in [n]$. In this setting, degeneracy means that more than $d$ inequalities are tight at $x$. 
Consider a subset $N$ of the set of indices of all inequalities that are tight at $x$ with $|N| = d$ and let $B:=[n]\setminus N$ \textcolor{black}{(see Figure~\ref{fig:antistalling})}. Note that here we intentionally redefine a notation used in the standard equality form, where non basic coordinates always have their corresponding constraints tight.
Since $x$ is degenerate, there is at least one inequality that is tight at $x$ with index in $B$. We denote the subset of such inequalities by $W \subseteq B$. Finally, assume $x$ is not an optimal vertex of $\mathcal P$ when minimizing an objective function $c \in \Real^d$ over $\mathcal P$, and let $y^0 \in \Real^d$ be an improving feasible direction at $x$, i.e., such that $x+\varepsilon y^0 \in \mathcal P$ for a sufficiently small positive $\varepsilon$ and $c^Ty^0 < 0$. 
In order to find an improving edge of $\mathcal P$ at $x$, we look at the directions given by the extremal rays of the basic cone $C(N) := \{x\in \Real^n \mid a_i^T x \le 0, i \in N\}$. If there is an improving feasible direction among them, we are done. 
Otherwise, we reduce the dimension of the polytope in the following way. We pick an extremal ray $z \in C(N)$ such that $c^Tz < 0$. Note that $z$ is formed by $d-1$ inequalities from $N$. Let $f \in N$ be the only inequality index not used to define $z$\textcolor{black}{, among the $d$ tight inequalities in $C(N)$ ($f$ is depicted as the gray fading facet in Figure~\ref{fig:antistalling})}. Consider a vector combination $y^0 + \alpha z$ where $\alpha \ge 0$ and note that it provides an improving direction for any positive $\alpha$. Since $y^0$ is contained in the feasible cone $C(N\cup W) = \{x\in \Real^n \mid a_i^T x \le 0, i \in N \cup W\}$ but $z$ is not, the vector $y^0 + \alpha z$ leaves $C(N\cup W)$ for sufficiently large $\alpha$. Hence there has to be a number $\alpha^1 \ge 0$ such that $y^0 + \alpha^1 z$ is contained in a facet of $C(N\cup W)$. Without loss of generality assume that the latter facet is defined by the $g\textsuperscript{th}$ inequality \textcolor{black}{(green facet in Figure~\ref{fig:antistalling})}. Note that $g\in W$ since $y^0$ and $z$ are both contained in the basic cone $C(N)$ and so is $y^0 + \alpha z$ for any non-negative $\alpha$.
Then, we define a new improving feasible direction $y^1 := y^0 + \alpha^1 z$, $N = \big(N\setminus\{f\}\big)\cup\{g\}$, $B = \big(B \setminus\{g\}\big) \cup \{f\}$ and continue searching for an improving edge inside the facet defined by the $g\textsuperscript{th}$ inequality. Since $x+\varepsilon y^1$ belongs to this facet for an $\varepsilon >0$ small enough, the latter yields dimension reduction. After at most $d-1$ such steps we are bound to encounter a one-dimensional face of $\mathcal P$ containing $x+\varepsilon y^{\prime}$, where $y^{\prime}$ is an improving feasible direction at $x$. 
For an illustration see Figure \ref{fig:antistalling}.
\begin{figure}[h]
    \centering
    \begin{subfigure}[b]{0.47\textwidth}
        \includegraphics[width=\textwidth]{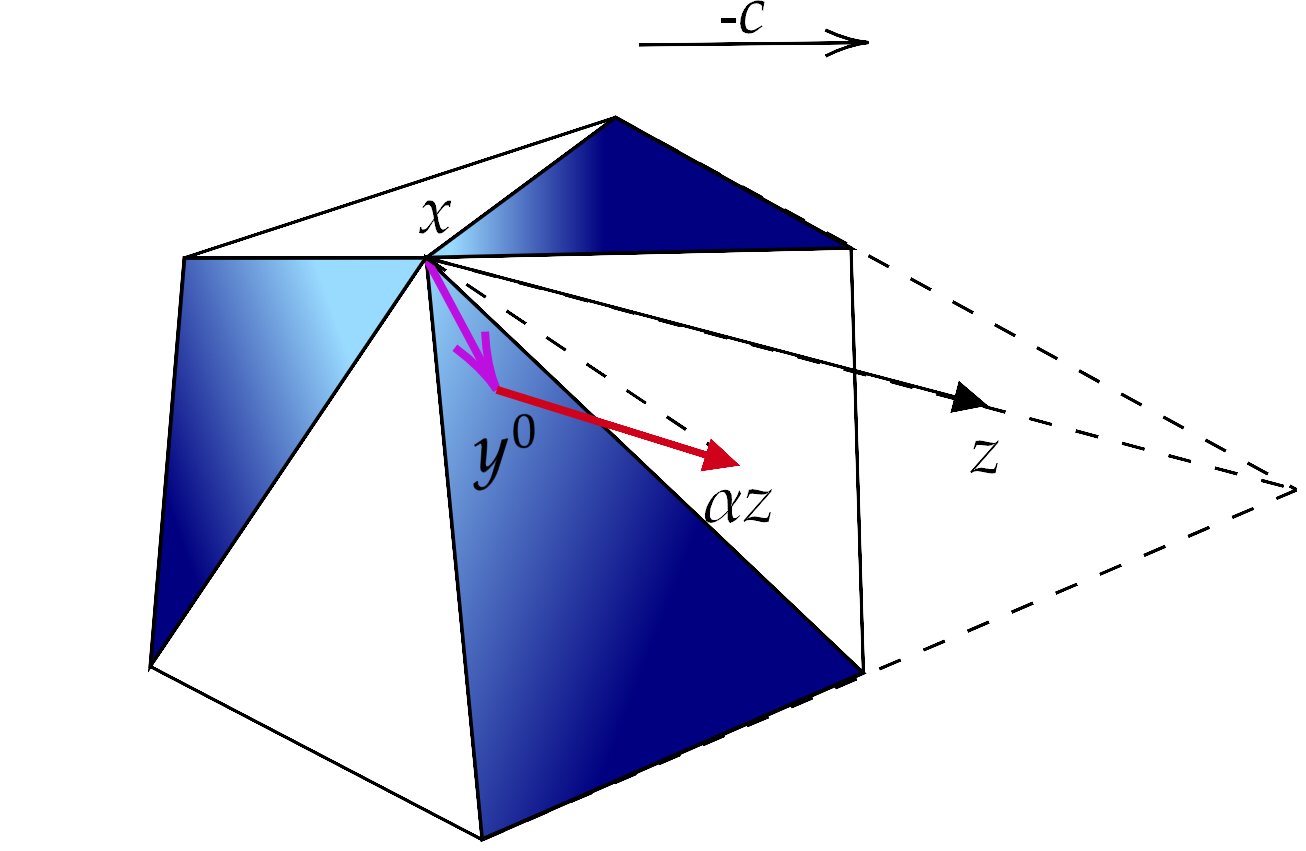}
    \end{subfigure}
    \hfill
    \begin{subfigure}[b]{0.47\textwidth}
        \includegraphics[width=\textwidth]{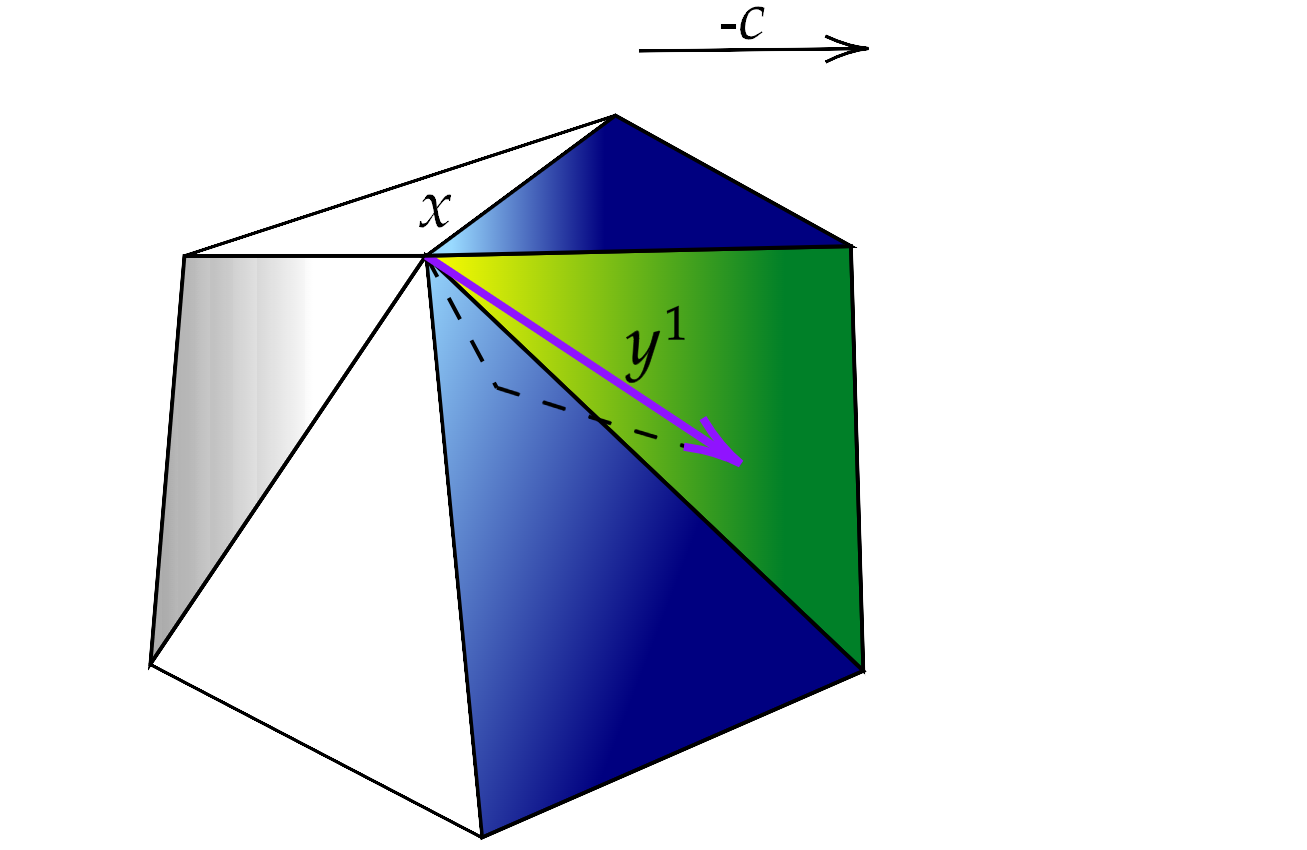}
    \end{subfigure}
    \caption{To the left, facets corresponding to inequalities in $N$ and $W$ are colored in navy and white, respectively. To the right, the gray fading facet is defined by the $f\textsuperscript{th}$ inequality and the green one corresponds to the $g\textsuperscript{th}$.}
    \label{fig:antistalling}
\end{figure}

\subsection{Proof of Theorem \ref{th:pivot-rule}}
\begin{proof}

Let $x$ be a degenerate basic feasible solution of \eqref{eq:LP} with basis $B$. Without loss of generality, assume $B = [m]$. Then, $x_i=0$ for $i \in N = [n]\setminus[m]$. Assume also that $x_i = 0$ for $i\in[k]$ with $1 \le k \le m$ and $x_i>0$ for $i=k+1,\dots,m$. Let $S(B):= \{i \in N \mid \overline{c}_i < 0\}$. Since $x$ is not optimal, $S(B) \neq \emptyset$.

It is well known that a basic feasible solution $x$ of \eqref{eq:LP} with basis $B$ is not optimal if and only if there exists an improving feasible direction, i.e., there exists $y^0 \in \Real^n$ such that $Ay^0= 0$, $c^Ty^0 < 0$ and $y^0_i \ge 0$ for all $i \in [k] \cup \big([n]\setminus[m]\big)$. 
Consider any such $y^0$. Let $Q_1(y^0, B):=\{i\in[k]\mid y^0_i > 0\}$ and $Q_2(y^0, B):=\{i\in[n]\setminus[m] \mid y^0_i > 0\}$. Without loss of generality, let $Q_1(y^0, B) = [r]$ with $0 \le r \le k$ and $Q_2(y^0, B) = \{m+1,\dots,m+t\}$ with $1\le t \le n-m$. See Table \ref{tab:pivotrule} for an illustration.

\renewcommand{\arraystretch}{1.5}
\begin{table}[]
    \centering
    \begin{tabular}{c|cccc|ccc|ccc|cccc|ccc}
         & 1&  & \dots & $r$ & & \dots &$k$ & & \dots & $m$ & $f$ & & \dots & $m+t$ & & \dots & $n$ \\
         \hline
    $x$ & 0 & 0 & \dots & 0 & 0 & \dots & 0 & + &\dots & + & 0 &0 & \dots & 0  & 0 & \dots & 0 \\
       
     $y^0$ & +& + &\dots& + & 0& \dots& 0 & $\star$ &\dots& $\star$ & + & + &\dots& +& 0 &\dots& 0 \\

     $z$ & -- &$\star$ &\dots& $\star$  & $0_+$ &\dots& $0_+$ & $\star$& \dots &$\star$ & 1 &0& \dots& 0 & 0 &\dots &0 \\

      $y^0 +\alpha z$ & 0 &$0_+$ &\dots &$0_+$  & $0_+$ &\dots &$0_+$ & $\star$ &\dots& $\star$ & + &+&\dots& + & 0 &\dots& 0 
      
    \end{tabular}
    
    \caption{An illustration of entries of $x, y^0, z$ and $y^1:=y^0+\alpha z$. A positive, a non-negative, a negative,  and a sign-arbitrary entry is denoted by $+$, $0_+$, $-$ and $\star$, respectively. Without loss of generality, we here assumed that the entering index $f$ is $m+1$, and that an index $g$ for which the minimum in \textbf{Case III} is attained is 1.}
    \label{tab:pivotrule}
\end{table}

Since $c^Ty^0 = \overline{c}_N^T y_N^0 < 0$ due to \eqref{eq:basis_decomp}, it follows that $S(B) \cap Q_2(y^0, B) \neq \emptyset$. We choose the entering index to be a non-basic one with the most negative reduced cost\footnote{We would like to point out that to prove the theorem, it suffices to choose \emph{any} non-basic index with negative reduced cost in the support of $y^0$ as the entering index. The fact that $f$ has the most negative reduced cost among such indices will only be used for the results in the next section.} in the support of $y^0$, that is $f = \argmin_{i \in S(B) \cap Q_2(y^0, B)} \overline{c}_i$. To detect the leaving index, we consider the following case distinction. 
Note that the case distinction depends on the basis $B$ and the improving feasible direction $y^0$.

\smallskip
\textbf{Case I:} \emph{There is no $i\in [k]$ with $\overline{A}_{if} > 0$}. 
In this case, the minimum ratio test for $f$ is strictly positive, that is: 
$$(*) \quad \min_{i \in Eq^>(f)} \frac{x_i}{\overline{A}_{if}} >0 \mbox{ where } Eq^>(f) := \{i\in [m] \mid \overline{A}_{if} > 0\}\,.$$ 
In particular, this means $Eq^>(f) \subseteq [m]\setminus[k]$. We perform a non-degenerate pivot, by selecting an index that minimizes $(*)$ as the leaving index. 

\smallskip
\textbf{Case II:} \emph{$\overline{A}_{if} > 0$ for some $i \in [k]\setminus[r]$}.  
In this case, we perform a degenerate pivot by selecting $i$ as the leaving index. Let $B^\prime := B \cup \{f\} \setminus\{i\}$. Because of degeneracy, the basic solution associated with $B'$ is still $x$, and hence $y^0$ is still an improving feasible direction for $x$. Note that $|Q_2(y^0, B^\prime)| = |Q_2(y^0, B)| - 1$ since $y^0_f > 0$ but $y^0_i = 0$ by definition. Repeat the same for $B^\prime$ and $y^0$.

\smallskip
\textbf{Case III:} \emph{$\overline{A}_{if} \le 0$ for all $i \in [k]\setminus[r]$ and $\overline{A}_{if} > 0$ for some $i \in [r]$}. 
In this case, we are going to change our improving feasible direction. Consider the following vector $z \in \Real^n$ which is, in fact, an improving (though not feasible) direction for the entering variable $x_f$: 
\begin{equation}\label{eq:ray}
   z_i=\begin{cases}
    -\overline{A}_{if} & \text{for $i\in [m]$,}\\
    1 & \text{for $i=f$,}\\
    0 & \text{otherwise.}
  \end{cases}
\end{equation}
Note that $Az = 0$ and $c^Tz = \overline{c}_f < 0$. Moreover $z_i \ge 0$ for each $i \in \big([r]\setminus Eq^>(f)\big) \cup \big([k]\setminus[r]\big) \cup \big([n]\setminus[m]\big)$ and $z_i <0$ for $i \in Eq^>(f)$ by definition. Set 
$$y^1:=y^0 + \alpha z \; \mbox{ with } \; \alpha := \min_{i \in Eq^>(f) \cap [r]} \frac{y_i^0}{-z_i} > 0\,.$$ 
Observe that $y^1$ is a feasible direction for $x$, since $A(y + \alpha z) = 0$, and $y^1_i \ge 0$ for 
$i \in [k]\cup\big([n]\setminus[m]\big)$, because of the choice of $z$ and $\alpha$. Furthermore, $y^1$ is an improving direction, because $c^Tz < 0$ and hence 
\begin{equation}\label{eq:next_sec}
c^Ty^1 = c^Ty^0 + \alpha c^Tz < c^Ty^0 < 0\,.
\end{equation}
Note that $Q_2(y^1, B) = Q_2(y^0, B)$.
Furthermore, let $g \in Eq^>(f) \cap [r]$ be the index for which the value of $\alpha$ is attained. We have $y^1_g = 0$ and $\overline{A}_{gf}>0$. 
We now repeat the case distinction above for $B$ and $y^1$.

The key observation is that when the third case of the above case distinction has occurred, repeating the same for $B$ and $y^1$  falls into the second case (because the basis $B$ has not changed, meanwhile $y^1_g = x_g = 0$ with $g \in B$ and $\overline{A}_{gf}>0$). Therefore, after each degenerate pivot, the cardinality of the support of the improving feasible direction in the non-basic indices decreases. Hence, a sequence of $|Q_2(y^0,B)|$ degenerate pivots would yield an improving feasible direction $y'$ and a basis $B'$ of $x$ with $Q_2(y', B') = 0$, which in turn implies $0 = \overline{c}^T_{N'}y'_{N'} = c^Ty'$ due to \eqref{eq:basis_decomp}, yielding a contradiction. Hence the number of consecutive degenerate pivots with the suggested pivot rule cannot exceed  $n-m-1$. \qed

\end{proof}

For the sake of clarity, the antistalling pivot rule is formally summarized in Algorithm \ref{alg:pivot}.
\begin{algorithm}[h] 

\caption[Antistalling pivot rule]{Antistalling pivot rule.} 
\label{alg:pivot} 
\begin{algorithmic}[1]
 \REQUIRE A basic feasible solution $x$ of $\eqref{eq:LP}$ with basis $B = \{ 1,\dots,m\}$ and an improving feasible direction $y$. 
 \vspace{1em}

 \STATE Compute the matrix $\overline{A} := A_B^{-1}A_N$ and the reduced cost vector $\overline{c}_N^T := c_N^T - c_B^T \overline{A}$, where $N:=[n]\setminus B$.
 \STATE Compute the entering index $f$ as $\argmin_{\{i\in N:\overline{c}_i<0\} \cap \supp(y)} \overline{c}_i$.
 \IF{ $\overline{A}_{if} \le 0$}
    \STATE The optimal cost is unbounded: the algorithm terminates.
   \ELSE
    \STATE Perform the minimum ratio test: \newline $r := \min_{i \in \supp\big(\overline{A}_{if}\big)} \frac{x_i}{\overline{A}_{if}}$ and $J := \argmin_{i \in \supp\big(\overline{A}_{if}\big)}\frac{x_i}{\overline{A}_{if}}$. 

    \IF{$r>0$ (\textbf{case I})}
        \STATE Choose $j \in J$ as the leaving variable. Perform non-degenerate pivot: \\$B=B\cup\{f\}\setminus\{j\}$. 
    \ELSIF{$J \cap B \setminus \supp(y) \neq \emptyset$ (\textbf{case II})}
        \STATE Choose $j \in J \cap B \setminus \supp(y)$ as the leaving variable. 
                Perform degenerate pivot: $B=B\cup\{f\}\setminus\{j\}$. Go back to 1.
    \ELSE
        \STATE (\textbf{case III}) Compute improving (though not feasible) direction $z$ as: 
\begin{equation*}
   z_i=\begin{cases}
    -\overline{A}_{if} & \text{for $i\in B$,}\\
    1 & \text{for $i=f$,}\\
    0 & \text{otherwise.}
  \end{cases}
\end{equation*}
Compute the new improving feasible direction $y := y+\alpha z$, where $\alpha := \min_{i \in \supp(\overline{A}_{if}) \cap \supp(y) \setminus \supp(x)} \frac{y_i}{\overline{A}_{if}}$. Go back to 3.
    \ENDIF
\ENDIF
\end{algorithmic}
\end{algorithm}

We conclude with a \textcolor{black}{strengthening of the bound of Theorem~\ref{th:pivot-rule}} which will be useful in the next section. 

\begin{theorem}\label{th:pivot-rule2}
    \textcolor{black}{
    Given any LP of the form $\min\{c^T x: Ax=b, x\geq 0\}$ with $A \in \Real^{m \times n}$, there exists a pivot rule that limits the number of consecutive degenerate simplex pivots at any non-optimal extreme solution $x$ to at most $\min\{n-m-1, m-1\}$. }
    
    \textcolor{black}{The pivot rule can be efficiently implemented whenever an improving feasible direction of the form $\tilde x-x$, for some extreme point solution $\tilde x$, is available.}
\end{theorem}

\begin{proof}
Let $x$ be the currently considered basic feasible solution with basis $B$. If $y^{0}$  is chosen to be equal to $\tilde x- x$ for some basic feasible solution $\tilde x$ of our LP with $c^T \tilde x < c^T x$, then one can observe that $|Q_2(y^0, B)| \le m$ because $\tilde x$ has at most $m$ non-zero coordinates. By the proof of Theorem \ref{th:pivot-rule} above, the latter value strictly decreases with each degenerate step produced by the antistalling pivot rule. Hence, the total number of consecutive degenerate pivots at the vertex $x$ can be strengthened to be at most $\min\{n-m-1, m-1\}$. \qed
\end{proof}

\section{Exploiting the antistalling rule for general bounds}
\label{sec:applications}

Here we combine the result of the previous section with the analysis of \cite{kitahara2013,kitahara2014}. The authors of the latter works
give a bound on the number of non-degenerate simplex pivots that depends on $n,m$ and the maximum and the minimum non-zero coordinate of basic solutions. The combined analysis yields a similar bound on the total number of (both degenerate and non-degenerate) simplex pivots.

We need a few additional notations. For any vector $x$, we let $\supp(x)$ denote its support. We denote the smallest and the largest non-zero coordinate of any basic feasible solution of \eqref{eq:LP} by $\delta$ and $\Delta$, respectively. We let $x^\star$ be an optimal basic feasible solution of \eqref{eq:LP}. Finally, for a generic iteration $q$ of the simplex algorithm with basis $B^q$ and an improving feasible direction $y^q$, we let $\Delta^q_{\overline{c}} := \max_{\{i \in N^q \mid y^q_i > 0\}} -\overline{c}_{N^q, i}$. Note that $\Delta^q_{\overline{c}}$ is the absolute value of the reduced cost of the entering variable according to the antistalling pivot rule defined in the previous section when using $y^q$.

We make use of the following result from~\cite{kitahara2014}.
\begin{lemma}[Lemma 4 of \cite{kitahara2014}]\label{lem:framework}
    If there exists a constant $\lambda >0$ such that
    \begin{equation} \label{eq:framework}
        c^T (x^{q+1} - x^\star) \le \big( 1-\frac{1}{\lambda}\big)c^T (x^{q} - x^\star)
    \end{equation}
    holds for any consecutive distinct basic feasible solutions $x^q\neq x^{q+1}$ generated by the simplex algorithm (with any pivot rule), the total number of distinct basic feasible solutions encountered is at most
    \begin{equation*}
        (n-m)\Big\lceil\lambda \ln \frac{m\Delta}{\delta}\Big\rceil\,.
    \end{equation*}
\end{lemma}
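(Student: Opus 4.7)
The plan is to combine a geometric progression argument on the optimality gap with a structural argument relating that gap to the coordinates of $x^\star$, then convert this into a bound on the number of distinct bases visited.

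First, I would iterate the hypothesis~\eqref{eq:framework} across all consecutive pairs of distinct basic feasible solutions. This yields
\[
c^T(x^q - x^\star) \;\le\; \Bigl(1-\tfrac{1}{\lambda}\Bigr)^{\!q} c^T(x^0 - x^\star) \;\le\; e^{-q/\lambda}\, c^T(x^0 - x^\star),
\]
so the gap shrinks geometrically in the number of distinct solutions encountered. The initial gap can be bounded by a quantity of order $m\Delta$ (up to a scaling of $c$ that is harmless in the final ratio) because each basic feasible solution has at most $m$ nonzero coordinates, none exceeding $\Delta$.

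Second, I would prove a structural lemma linking the gap to each coordinate $j \in \supp(x^\star)$. Using the identity~\eqref{eq:basis_decomp} applied to the direction $x^\star - x^q \in \kernel(A)$, one has
\[
c^Tx^\star - c^Tx^q \;=\; \overline{c}_{N^q}^{\,T} x^\star_{N^q},
\]
so the gap is a nonnegative combination of the reduced costs of $x^\star$ relative to the current basis $B^q$, weighted by the entries $x^\star_i$ for $i \in N^q \cap \supp(x^\star)$. Since every nonzero entry of $x^\star$ is at least $\delta$, if some $j \in \supp(x^\star)$ sits in $N^q$ then the gap is bounded below by $\delta$ times a quantity that, after appropriate normalization, forces the contrapositive: once the gap has shrunk by a factor of roughly $m\Delta/\delta$, every $j \in \supp(x^\star)$ must already be in the current basis with value at least $\delta$.

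Third, I would package this into the counting bound. Setting $K := \lceil \lambda \log_e(m\Delta/\delta)\rceil$, the geometric decay combined with the structural lemma shows that within any window of $K$ consecutive distinct basic feasible solutions, the identity of at least one index in $\supp(x^\star)$ that is "stuck in" (respectively "stuck out of") the basis must have been forced into its final state; equivalently, after at most $K$ iterations some coordinate of $N^q$ gets permanently replaced by an element of $\supp(x^\star)$. Since there are only $n-m$ slots in $N^q$ to be so replaced, the total number of distinct basic feasible solutions is at most $(n-m)K$. The main obstacle I expect is the structural step: cleanly deriving from the decomposition $c^Tx^\star - c^Tx^q = \overline{c}_{N^q}^{\,T}x^\star_{N^q}$ that, once the gap is small enough, indices of $\supp(x^\star)$ cannot leave the basis thereafter without contradicting the monotonic decrease of $c^Tx^q$. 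This "once in, stays in" argument is what ultimately justifies charging each non-basic slot only once and producing the clean factor $n-m$ outside the logarithm.
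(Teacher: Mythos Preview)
Note first that the present paper does not prove this lemma; it is quoted from \cite{kitahara2014} and used as a black box, so there is no in-paper proof to compare against---only the original Kitahara--Mizuno argument.

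Your overall architecture (geometric decay of the gap, a structural link between the gap and individual coordinates, then charging each of $n-m$ slots at most $K$ times) matches theirs, but your structural step is pointed in the wrong direction and does not go through. From the decomposition $c^Tx^q-c^Tx^\star=\sum_{j\in N^q}(-\overline c_{N^q,j})\,x^\star_j$ you try to conclude that once the gap is small every $j\in\supp(x^\star)$ must lie in $B^q$. This is false: the reduced costs $\overline c_{N^q,j}$ depend on the current basis and can be zero or of either sign, so an index $j\in\supp(x^\star)$ may remain non-basic with an arbitrarily small (or negative) contribution regardless of how small the gap is, and terms in the sum may cancel. You flag the ``once in, stays in'' step as the main obstacle, and indeed it does not follow from your decomposition, because the quantities $\overline c_{N^q,j}$ move with $q$.

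Kitahara--Mizuno use the dual decomposition instead. Letting $d_j:=\overline c_{N^\star,j}\ge 0$ denote the reduced costs relative to an \emph{optimal} basis $B^\star$ (and $d_j=0$ for $j\in B^\star$), one has $c^Tx^q-c^Tx^\star=\sum_{j\in N^\star} d_j\,x^q_j$, a sum of at most $m$ nonnegative terms. Averaging yields some $j^\star\in B^q\cap N^\star$ with $d_{j^\star}\ge (c^Tx^q-c^Tx^\star)/(m\Delta)$. Crucially $d_{j^\star}$ is \emph{fixed}, independent of the iteration, so any later iterate with $x^s_{j^\star}>0$ satisfies $c^Tx^s-c^Tx^\star\ge d_{j^\star}\delta$. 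Hence once the gap drops below $d_{j^\star}\delta$---guaranteed within $K=\lceil\lambda\ln(m\Delta/\delta)\rceil$ further steps by the hypothesis---we have $x^s_{j^\star}=0$ permanently. Each block of $K$ iterations therefore retires a fresh index of $N^\star$, and $|N^\star|=n-m$ gives the bound. The decisive asymmetry you missed is that the optimal reduced costs are basis-independent, whereas the current ones are not.
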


Given (\ref{eq:LP}), apply the simplex algorithm with the antistalling pivot rule described in the previous section. In particular, at a general iteration $t$ of the algorithm, let $B^t,x^t$, and $y^t$ be the basis, the basic solution, and the improving feasible direction considered by the algorithm, respectively. Let $N^t := [n]\setminus B^t$.
If $x^{t} \neq x^{t-1}$ (i.e., we encounter $x^t$ for the first time, as a result of a non-degenerate pivot) let $y^t:=x^\star - x^t$. 
Observe that, once this is specified, the improving feasible direction is determined by the antistalling rule for all remaining degenerate pivots at $x^t$. 


 
\begin{lemma} \label{lem:useful} 
Let $x^t = x^{t+k}$ be the basic feasible solution associated with bases $B^t$ and $B^{t+k}$, and assume $x^{t+k} \neq x^{t+k+1}$.
The following holds:
 \begin{itemize}
\item[(a)] $c^Ty^{t+k} \le c^Ty^{t}$.
\item[(b)] $y^{t+k}_{i} = y^t_{i}$ for all $i \in N^{t+k} \cap\, \supp(y^{t+k})$.
\end{itemize}
\end{lemma}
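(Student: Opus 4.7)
The plan is to track the evolution of the direction vector across the $k$ consecutive degenerate iterations that occur at the vertex $x^t = x^{t+k}$, and then to read off both (a) and (b) from what can happen to a single coordinate of $y$. By the antistalling rule, at each iteration $s$ with $t \le s \le t+k-1$ the algorithm either falls into Case~II, leaving $y$ unchanged while performing one basis exchange, or first invokes Case~III to update $y \mapsto y + \alpha z$ for some $\alpha > 0$ with the vector $z$ from~(\ref{eq:ray}) determined by the entering index $f$, and then performs a Case~II pivot with the updated direction. Since $x^{t+k} \neq x^{t+k+1}$, the pivot at iteration $t+k$ must fall into Case~I and does not modify $y$. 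Hence $y^{t+k}$ is obtained from $y^t$ by a finite sequence of updates of the form $y \mapsto y + \alpha z$, one per Case~III step executed during iterations $t, t+1, \ldots, t+k-1$.

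For part (a), I would invoke the identity $c^T z = \overline{c}_f < 0$ already derived in the proof of Theorem~\ref{th:pivot-rule} (see the line preceding~(\ref{eq:next_sec})). Each Case~III update therefore strictly decreases $c^T y$ by $\alpha\,|\overline{c}_f|>0$, while plain Case~II iterations leave $c^T y$ untouched. Chaining these bounds across all intermediate iterations immediately yields $c^T y^{t+k} \le c^T y^t$.

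For part (b), the key structural property of the vector $z$ in~(\ref{eq:ray}) is that it vanishes on every non-basic coordinate other than the entering index $f$. As a consequence, any coordinate $i$ that is non-basic at every intermediate iteration (i.e., $i \in N^{s}$ for all $t \le s \le t+k$) is not touched by any Case~III update, so $y^{t+k}_i = y^t_i$. It remains to treat the case where $i \in N^{t+k}$ but $i$ was basic at some intermediate iteration. In that case $i$ must leave the basis at least once before iteration $t+k$; consider its last such departure. The leaving index of any Case~II step lies in $[k]\setminus[r]$ and therefore has current $y$-value equal to $0$, hence $y_i = 0$ at the moment of this last departure. Afterwards $i$ stays non-basic up to iteration $t+k$ by maximality of the last departure, and every subsequent Case~III update has $z_i = 0$ since $i$ is non-basic and is never re-chosen as entering. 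Hence $y^{t+k}_i = 0$ and $i \notin \supp(y^{t+k})$.

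Combining the two cases yields (b): any $i \in N^{t+k} \cap \supp(y^{t+k})$ must have stayed non-basic throughout the degenerate sequence, and for such $i$ the direction is preserved coordinate-wise. The only delicate part of the bookkeeping is handling coordinates that toggle between basic and non-basic status more than once; the clean invariant, namely that each departure from the basis resets the corresponding $y$-coordinate to $0$ and that this value is preserved as long as the coordinate remains non-basic, is what makes the argument go through directly.
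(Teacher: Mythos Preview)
Your proof is correct and follows essentially the same approach as the paper: part~(a) is identical (chaining the inequality $c^Tz=\overline{c}_f<0$ across Case~III updates), and part~(b) relies on exactly the two structural facts the paper uses, namely that $z$ vanishes on all non-basic coordinates except the entering index $f$, and that the leaving index in any Case~II pivot has current $y$-value zero. The only difference is organizational: the paper argues by induction on the iterations (showing each Case~III+Case~II block preserves the invariant on $N\cap\supp(y)$), whereas you fix a coordinate $i\in N^{t+k}$ and trace its history, handling explicitly the case of indices that toggle in and out of the basis; this makes your write-up slightly more explicit but otherwise equivalent.
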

 
 \begin{proof}
The statement in (a) follows from~(\ref{eq:next_sec}). The statement in (b) follows from the construction of our antistalling rule by induction: if Case III never occurs during the $k$ degenerate pivots, then $y^{t+k} = y^t$ and the statement holds trivially. Suppose that the situation of Case III appears for
$B^{t+j}, y^{t+j}$. The improving feasible direction  changes (as $y^{t+j+1} = y^{t+j} + \alpha z$ ), but among the non-basic coordinates this only affects the value of $y^{t+j+1}_f$ (where $f$ is the entering index). Immediately after, Case II occurs and $f$ gets pivoted in, while a basic index $i$ with $y^{t+j+1}_i=0$ gets pivoted out, i.e., $B^{t+j+1} = \big(B^{t+j}\setminus\{i\}\big)\cup\{f\}$ and $N^{t+j+1} = \big(N^{t+j}\setminus\{f\}\big)\cup\{i\}$. Hence, the statement holds.    \qed
 \end{proof}

The next result, which establishes \eqref{eq:framework} with $\lambda := \frac{(n-m)\Delta}{\delta}$ for the simplex algorithm with the antistalling pivot rule, is inspired by \cite{kitahara2014}[Theorem 3].

\begin{lemma} \label{lem:optgap} Let $x^t = x^{t+k}$ be the basic feasible solution associated with bases $B^t$ and $B^{t+k}$. Assume 
$x^{t-1} \neq x^{t}$ and $x^{t+k} \neq x^{t+k+1}$. We have
        \begin{equation*}
        c^Tx^{t+k+1}-c^Tx^\star \le \big( 1 - \frac{\delta}{(n-m)\Delta} \big) (c^Tx^{t+k}-c^Tx^\star)\,.
    \end{equation*}
\end{lemma}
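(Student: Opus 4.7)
The plan is to decompose the objective improvement at the non-degenerate pivot from $x^{t+k}$ to $x^{t+k+1}$ and bound it from below by the current optimality gap $c^T x^{t+k} - c^T x^\star$. Writing $x^{t+k+1} = x^{t+k} + \theta d$, where $f \in N^{t+k}$ is the entering index, $\theta > 0$ is the minimum ratio test value, and $d$ is the usual simplex direction (so that $c^T d = \overline{c}^{t+k}_f$), one obtains
$$c^T x^{t+k+1} - c^T x^{t+k} \;=\; \theta\, \overline{c}^{t+k}_f.$$
Since the pivot is non-degenerate, $\theta$ equals the positive value that the entering variable takes in $x^{t+k+1}$, so $\theta \ge \delta$. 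By definition of $\Delta^{t+k}_{\overline{c}}$ and the antistalling rule, the entering index $f$ satisfies $\overline{c}^{t+k}_f = -\Delta^{t+k}_{\overline{c}}$. Hence
$$c^T x^{t+k+1} - c^T x^{t+k} \;\le\; -\delta\, \Delta^{t+k}_{\overline{c}}.$$

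The next step is to lower bound $\Delta^{t+k}_{\overline{c}}$ in terms of the current optimality gap. Since $x^{t-1} \neq x^t$, the seed direction at iteration $t$ is $y^t = x^\star - x^t$, and Lemma~\ref{lem:useful}(a) gives $c^T y^{t+k} \le c^T y^t = c^T x^\star - c^T x^t$. Using $x^t = x^{t+k}$, this rearranges to $c^T x^{t+k} - c^T x^\star \le -c^T y^{t+k}$. Applying~\eqref{eq:basis_decomp} with basis $B^{t+k}$, and using that $y^{t+k}_i \ge 0$ for each $i \in N^{t+k}$ (since $y^{t+k}$ is a feasible direction at $x^{t+k}$, whose non-basic coordinates all vanish), I would expand
$$-c^T y^{t+k} \;=\; \sum_{i \in N^{t+k},\, y^{t+k}_i > 0} (-\overline{c}_i)\, y^{t+k}_i \;\le\; \Delta^{t+k}_{\overline{c}} \sum_{i \in N^{t+k},\, y^{t+k}_i > 0} y^{t+k}_i.$$

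To finish, I would bound the rightmost sum by $(n-m)\Delta$. Here Lemma~\ref{lem:useful}(b) is the crucial ingredient: for each $i \in N^{t+k} \cap \supp(y^{t+k})$ it gives $y^{t+k}_i = y^t_i = x^\star_i - x^t_i = x^\star_i$, where the last equality holds because $i \in N^{t+k}$ and $x^t = x^{t+k}$ force $x^t_i = 0$. Each such $y^{t+k}_i$ is therefore a positive coordinate of $x^\star$, so is at most $\Delta$, and there are at most $|N^{t+k}| = n-m$ such indices. Chaining the inequalities yields $c^T x^{t+k} - c^T x^\star \le \Delta^{t+k}_{\overline{c}}\,(n-m)\Delta$; substituting the resulting lower bound $\Delta^{t+k}_{\overline{c}} \ge (c^T x^{t+k} - c^T x^\star)/((n-m)\Delta)$ back into the first display and adding $c^T x^{t+k} - c^T x^\star$ to both sides yields the claimed contraction.

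The main subtlety, rather than genuine difficulty, is keeping track of the fact that $y^{t+k}$ and $N^{t+k}$ may have been updated through several Case III steps during the degenerate stall at $x^t$, so a direct comparison against $x^\star$ is no longer straightforward. Lemma~\ref{lem:useful} is precisely the bridge that lets us pull the estimate back to the clean seed direction $y^t = x^\star - x^t$, which is what makes the bound $(n-m)\Delta$ accessible and ensures that the $\Delta^{t+k}_{\overline{c}}$ selected by the antistalling rule at $B^{t+k}$ inherits the magnitude of the original optimality gap.
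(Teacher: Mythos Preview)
Your proof is correct and follows essentially the same approach as the paper: both arguments combine (i) the bound $c^Tx^{t+k}-c^Tx^\star \le (n-m)\Delta\,\Delta^{t+k}_{\overline{c}}$ obtained via Lemma~\ref{lem:useful}(a), \eqref{eq:basis_decomp}, and Lemma~\ref{lem:useful}(b), with (ii) the improvement bound $c^Tx^{t+k}-c^Tx^{t+k+1} = -\overline{c}_{N^{t+k},f}\,x^{t+k+1}_f \ge \delta\,\Delta^{t+k}_{\overline{c}}$, and then chain them. The only cosmetic differences are the order in which you present the two bounds and your extra observation that $x^t_i=0$ for $i\in N^{t+k}$, which the paper bypasses by directly using $x^\star_i - x^t_i \le \Delta$.
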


\begin{proof}
    The optimality gap for $x^{t+k}$ can be bounded as follows:
    \begin{equation}\label{eq:optgap}
     \begin{array}{rcl}
        c^Tx^{t+k}-c^Tx^\star & = & c^Tx^{t}-c^Tx^\star \\
        & = & -c^Ty^t \\
        & \le &-c^Ty^{t+k} \\
        & = &-\overline{c}^T_{N^{t+k}}y^{t+k}_{N^{t+k}} \\
        & = & \sum_{i\in N^{t+k}} -\overline{c}^T_{N^{t+k},i} y^{t+k}_{i} \\
        & = & \sum_{i\in N^{t+k} \mid y^{t+k}_i>0} -\overline{c}^T_{N^{t+k},i} y^{t+k}_{i} \\
        & \le & (n-m)\Delta^{t+k}_{\overline{c}}  \Delta \,,
    \end{array}   
\end{equation}
where we used Lemma~\ref{lem:useful}(a) for the first inequality, and \eqref{eq:basis_decomp} for the third equality. The last equality follows from $y^{t+k}_i \ge 0, i \in N^{t+k}$ which is implied by feasibility of $y^{t+k}$ at $x^{t+k}$. The last inequality follows from $\max_{\{i \in N^{t+k} \mid y^{t+k}_i > 0\}} -\overline{c}_{N^{t+k}, i} =\Delta^{t+k}_{\overline{c}}$, the fact that $|N^{t+k}|=n-m$, and because $y^{t+k}_{i} = y^t_{i} = x^\star_i - x^t_i \le \Delta$  
holds for any $i \in N^{t+k} \cap\, \supp(y^{t+k})$, using Lemma~\ref{lem:useful}(b).

Let $x^{t+k}_{f}$ denote the entering variable at $(t+k)\textsuperscript{th}$ iteration. Note that $x^{t+k+1}_{f} \neq 0$ since $x^{t+k} \neq x^{t+k+1}$. Then,
\begin{equation*}
    \begin{array}{rl}
        c^Tx^{t+k} - c^Tx^{t+k+1} &= \overline{c}_{N^{t+k}}^T(x^{t+k} - x^{t+k+1})_{N^{t+k}} \\
        &= -\overline{c}_{N^{t+k},f} x^{t+k+1}_{f} \\
        &\ge \Delta^{t+k}_{\overline{c}} \delta\\
        &\ge \frac{\delta}{(n-m)\Delta}c^T(x^{t+k}-x^\star)
    \end{array}
\end{equation*}
hold, where we used \eqref{eq:basis_decomp} for the first equality and \eqref{eq:optgap} for the last inequality. Rearranging the terms yields the lemma statement.\qed
\end{proof}

Now, combining Lemma \ref{lem:framework} and Lemma \ref{lem:optgap} allows to conclude that the simplex algorithm with the antistalling pivot rule described in Section \ref{sec:antistalling} encounters at most $(n-m)\lceil\frac{(n-m)\Delta}{\delta}\ln\big(m\frac{\Delta}{\delta}\big)\rceil$ distinct basic feasible solutions. 
Since \textcolor{black}{Theorem \ref{th:pivot-rule2}} bounds the number of consecutive non-degenerate steps by 
$\min\{n-m-1,m-1\}$ (again, due to the choice of the improving feasible direction), combining it with the latter result yields a bound on the total number of pivots required to reach an optimal vertex. However, this bound does not take into account the number of (degenerate) pivots that might have to be performed at an optimal vertex to encounter a basis satisfying the optimality criterion. This is due to the fact that the antistalling pivot rule requires an improving feasible direction, which we do not have at an optimal vertex. Hence, we have to handle this case separately.

\begin{lemma}\textcolor{black}{
Let $B$ and $B^\star$ be a non-optimal and an optimal basis, respectively, both associated with an optimal basic feasible solution $x^\star$ of \eqref{eq:LP}. We can obtain $B^\star$ from $B$ by performing at most $n-m$ degenerate simplex pivots.}    
\end{lemma}

\begin{proof}
Since $B$ is not optimal, there exists $f\in N$ with $\overline{c}_{N, f} = c^Tz<0$ with $z$ as in (\ref{eq:ray}). Observe that there exists $i \in B \cap N^\star$ with $\overline{A}_{if}>0$ and $x_i=0$: otherwise, all coordinates of $z_{N^\star}$ would be non-negative and hence $z \in C_{N^\star}:=\{x\in \Real^n\mid Ax=0, x_{N^\star} \ge 0\}$. The latter however contradicts the fact that all extreme rays of the above cone $C_{N^\star}$ have non-negative scalar products $\overline{c}_{N^\star}$ with $c$ due to optimality of $B^\star$. Hence one could perform a simplex pivot on $B$ with entering variable $f$ and leaving variable $i$. Let $B^\prime := \big(B \setminus\{i\}\big) \cup \{f\}$ and $N^\prime := \big(N \setminus\{f\}\big) \cup \{i\}$. Note that $\overline{c}_{N^\prime,i} \ge 0$ and $i\in N^\prime \cap N^\star$. Either $B^\prime$ is a optimal basis and we stop, or there exists $j \in N^\prime \setminus \{i\}$ with $\overline{c}_{N^\prime, j}<0$. In the latter case, however, we can enforce the constraint $x_i = 0$ by removing the variable $x_i$ together with the corresponding column of $A$ and entry $c_i$ of $c$ from \eqref{eq:LP}. By doing so we obtain a new LP with the number of variables smaller by one that has $B^\prime$ and $B^\star$ as a non-optimal and an optimal basis, respectively, since  $\overline{c}_{N^\prime \setminus \{i\}, j}<0$ and $\overline{c}_{N^\star \setminus \{i\}} \ge 0$. We can set $B=B^\prime$ and repeat the process. Since at each iteration a variable with index from $N^\star$ gets removed, after at most $n-m$ iterations $B = B^\star$. \qed
\end{proof}

Since the number of degenerate pivots at an optimal vertex is bounded by $n-m$ \textcolor{black}{by the above lemma}, we can state the following result. 


\begin{theorem}\label{th:num-pivots}
Given an LP \eqref{eq:LP} and an initial feasible basis, there exists a pivot rule that makes the simplex algorithm reach an optimal basis in at most $$\min\{n-m,m\}(n-m)\lceil \frac{(n-m)\Delta}{\delta}\ln\big(m\frac{\Delta}{\delta}\big)\rceil$$ simplex pivots.
\end{theorem}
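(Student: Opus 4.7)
The plan is to combine three ingredients that have already been developed: the convergence framework of Lemma~\ref{lem:framework}, the per-vertex degeneracy bound from Theorem~\ref{th:pivot-rule} together with Remark~1, and the separate handling of degenerate pivots at the optimal vertex sketched just before the theorem statement. The argument naturally decomposes into counting non-degenerate pivots, counting degenerate pivots at non-optimal vertices, and counting degenerate pivots at the optimal vertex.

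First, I would invoke Lemma~\ref{lem:optgap}, which establishes the inequality required by Lemma~\ref{lem:framework} with $\lambda := \frac{(n-m)\Delta}{\delta}$. Applying Lemma~\ref{lem:framework} then yields that the number of distinct basic feasible solutions encountered by the simplex algorithm (under the antistalling rule initialized with $y^t := x^\star - x^t$ after every non-degenerate step) is bounded by $(n-m)\lceil \frac{(n-m)\Delta}{\delta}\log_e(m\Delta/\delta)\rceil$. This is essentially the number of non-degenerate simplex pivots performed before reaching the optimal vertex, plus one.

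Next, for each non-optimal vertex $x^t$ visited, the choice $y^t = x^\star - x^t$ satisfies $|Q_2(y^t, B^t)| \le m$ because $x^\star$ is basic and has at most $m$ nonzero coordinates. By Theorem~\ref{th:pivot-rule} and Remark~1, the antistalling rule performs at most $\min\{n-m-1, m-1\}$ consecutive degenerate pivots before the next non-degenerate one. Multiplying by the number of distinct basic feasible solutions gives a total of at most $\min\{n-m, m\}(n-m)\lceil \frac{(n-m)\Delta}{\delta}\log_e(m\Delta/\delta)\rceil$ pivots up to and including the first arrival at the optimal vertex.

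The last (and, I expect, most delicate) ingredient is bounding the degenerate pivots the algorithm may still need to perform at the optimal vertex, where the antistalling rule is not applicable because no improving feasible direction exists. Following the argument already laid out in the paragraph preceding the theorem, at a non-optimal basis $B$ of $x^\star$ one always finds an entering index $f \in N$ with $\overline{c}_{N,f}<0$ and a leaving index $i \in B \cap N^\star$ with $x_i=0$ and $\overline{A}_{if}>0$, whose existence is forced by the optimality of $B^\star$ (otherwise the ray $z$ defined in \eqref{eq:ray} would lie in $C_{N^\star}$ and contradict $\overline{c}_{N^\star}\ge 0$). After such a pivot one can remove the variable $x_i$ from the LP without affecting $\overline{c}_{N^\prime\setminus\{i\}}$, so that the reduced LP still has $B^\prime$ non-optimal and $B^\star$ optimal but has one fewer variable. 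Induction on the number of indices of $N^\star$ still present in $B$ shows that at most $n-m$ such degenerate pivots suffice to reach $B^\star$. The main subtlety I anticipate is verifying that the removal of $x_i$ truly preserves the optimality structure across iterations, i.e., that reduced costs for the surviving indices and the nonoptimality witness $j$ are inherited correctly; once this bookkeeping is done, adding the $n-m$ pivots at the optimal vertex to the previous count and absorbing the additive term into the stated product gives exactly the bound of Theorem~\ref{th:num-pivots}.
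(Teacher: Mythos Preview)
Your proposal is correct and follows essentially the same route as the paper: combine Lemma~\ref{lem:optgap} with Lemma~\ref{lem:framework} to bound the number of distinct basic feasible solutions, multiply by the per-vertex degeneracy bound $\min\{n-m-1,m-1\}+1$ from Theorem~\ref{th:pivot-rule} and Remark~1, and append the separate $n-m$ bound for degenerate pivots at the optimal vertex derived in the paragraph preceding the theorem. The paper performs exactly this assembly (including the same implicit absorption of the final additive $n-m$ term into the product), so there is nothing to add.
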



Observe that for integral $A$ and $b$ (which can be assumed without loss of generality for rational LPs), one can bound $\Delta \le ||b||_1\Delta_{A}$ and $\delta \ge \frac{1}{\Delta_{A}}$ due to Cramer's rule, where $\Delta_{A}$ is the largest absolute value of the determinant of a square submatrix of $A$. Then the next statement is a straightforward corollary of the above theorem. 

\begin{corollary} \label{cor:num-pivots}
    For any basic feasible solution of an LP \eqref{eq:LP} with integral $A$ and $b$, there exists a sequence of at most $$\min\{n-m,m\}(n-m)\lceil (n-m)\Delta_{A}^2||b||_1\ln\big(m\Delta_{A}^2||b||_1\big)\rceil$$ simplex pivots leading to an optimal basis.
\end{corollary}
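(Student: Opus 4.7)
The plan is to derive this statement directly by substituting Cramer-rule bounds on $\Delta$ and $\delta$ into the expression from Theorem~\ref{th:num-pivots}. The only ingredients are (i) a control of $\Delta$ and $\delta$ in terms of $A$ and $b$ when $A,b$ are integral, and (ii) the monotonicity of the bound of Theorem~\ref{th:num-pivots} in the ratio $\Delta/\delta$.

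First I would recall that every basic feasible solution $x$ with basis $B$ satisfies $x_B = A_B^{-1} b$. Applying Cramer's rule coordinate-wise, each entry of $x_B$ equals $\det(M_i)/\det(A_B)$, where $M_i$ is obtained by replacing the $i$-th column of $A_B$ by $b$. Since $A, b$ are integral and $A_B$ is nonsingular, $|\det(A_B)|\geq 1$ and $|\det(A_B)|\leq \Delta_A$, and by expanding $\det(M_i)$ along the column that contains $b$ one obtains $|\det(M_i)|\leq \|b\|_1 \Delta_A$. Hence every nonzero coordinate of a basic feasible solution lies between $\delta \geq 1/\Delta_A$ and $\Delta \leq \|b\|_1 \Delta_A$, which gives the key inequality $\Delta/\delta \leq \Delta_A^2 \|b\|_1$.

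Next I would substitute this into Theorem~\ref{th:num-pivots}. The bound in that theorem depends on $\Delta$ and $\delta$ only through the ratio $\Delta/\delta$, both inside the linear factor $(n-m)\Delta/\delta$ and inside the logarithm $\log_e(m\Delta/\delta)$, and it is monotone non-decreasing in that ratio. Replacing $\Delta/\delta$ by the upper bound $\Delta_A^2\|b\|_1$ therefore yields the expression
\[
\min\{n-m,m\}(n-m)\Bigl\lceil (n-m)\Delta_A^2\|b\|_1 \log_e\bigl(m\Delta_A^2\|b\|_1\bigr)\Bigr\rceil,
\]
which is exactly the claimed bound.

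There is essentially no obstacle here: the corollary is stated as a straightforward consequence of Theorem~\ref{th:num-pivots}, and the only thing to verify carefully is the Cramer-rule estimate, in particular the factor $\Delta_A^2$ (one factor $\Delta_A$ from bounding $\det(M_i)$ and another from the reciprocal lower bound $1/|\det(A_B)|\geq 1/\Delta_A$, together with the $\|b\|_1$ contribution from expanding along the $b$-column). Once this is in place, the substitution is immediate and no further argument is required, since Theorem~\ref{th:num-pivots} already gives a pivot rule reaching an optimal basis and hence, starting from the given basic feasible solution, a sequence of simplex pivots of the stated length.
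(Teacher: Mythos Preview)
Your argument is correct and matches the paper's own reasoning: the corollary is obtained precisely by bounding $\Delta \le \|b\|_1\Delta_A$ and $\delta \ge 1/\Delta_A$ via Cramer's rule and substituting $\Delta/\delta \le \Delta_A^2\|b\|_1$ into Theorem~\ref{th:num-pivots}. The paper does not elaborate further than this, so your write-up is in fact more detailed than the original.
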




\subsection{Application to combinatorial LPs}
We conclude this section by observing that, using the latter corollary, one can prove the existence of short sequences of simplex pivots 
(that is, of length strongly-polynomial in $n,m$) between any two extreme points of several combinatorial LPs, that is, LPs modeling the set of feasible solutions of famous combinatorial optimization problems. We report a few examples below.

\paragraph{(a) LPs modeling matching/vertex-cover/edge-cover/stable-set problems in bipartite graphs.} 
For matchings, the LP maximizes a linear function over a set of constraint of the form $\{A'x \leq 1, x \geq 0\}$, where the coefficient matrix $A'$ is the node-edge incidence matrix of an undirected bipartite graph. After putting the LP in standard equality form by adding slack variables, we get constraints of the form $\{Ax = 1, x \geq 0\}$, where $A$ is a totally unimodular matrix (and so $\Delta_A =1$). The result then follows from Corollary~\ref{cor:num-pivots}. The same holds for vertex-cover 
(minimizing a linear function over constraints of the form $\{{A'}^Tx \geq 1, x \geq 0\}$), edge-cover (minimizing a linear function over constraints of the form $\{{A'}x \geq 1,  x \geq 0\}$), stable-set (maximizing a linear function over constraints of the form $\{{A'}^Tx \leq 1,  x \geq 0\}$). 

\paragraph{(b) LPs modeling optimization over the \emph{fractional} matching/vertex-cover/edge-cover/stable-set polytopes.}
These correspond to the natural LP relaxations of the problems discussed in (a), for general graphs. The same LPs as in (a), in non-bipartite graphs, have a constraint matrix $A'$ (resp. ${A'}^T$) that is not totally unimodular. However, the set of constraints defines half-integral polytopes (see~\cite{appa2006,Nemhauser74}). Note that, after putting the LPs in standard equality form, the slack variables can be loosely bounded from above by the number of variables $n$. Hence,  $\Delta \le n$ and $\delta \ge \frac{1}{2}$, and the result follows from Theorem~\ref{th:num-pivots}.

\paragraph{(c) LP for the stable marriage problem.}
The classical stable marriage problem is defined on a bipartite graph where each node has a strict preference order over the set of its neighbours. One looks for a matching that does not contain any blocking pair, that is, a pair of nodes that mutually prefer each other with respect to their matched neighbour.   
There is an (exact) LP formulation for the problem~\cite{Rothblum,LP-bliss}, that has constraints of the form $\{A'x \leq 1, B'x \geq 1, x \geq 0\}$. 
Here $A'$ is again the node-edge incidence matrix of an undirected bipartite graph, while $B'$ is a matrix that stems from imposing an additional constraint for each edge $\{uv\}$, that essentially prevents $\{uv\}$ from being a blocking pair: 
$$x_{uv} + \sum_{w: w >_u v}   x_{uw} + \sum_{w: w >_v u  } x_{vw} \geq 1$$

In the above expression, $w >_u v$ (resp. $w >_v u$) means that $u$ prefers $w$ over $v$ (resp. $v$ prefers $w$ over $u$).
After putting the LP in standard equality form, the slack variables can be bounded by $1$. Hence,  $\Delta = \delta = 1$, and the result follows from Theorem~\ref{th:num-pivots}.

\smallskip
\noindent
\paragraph{(d) LPs modeling various flow problems (such as max flows, min cost flows, flow circulations) with unit (or bounded) capacity values.} 
Flow problems in capacitated graphs are modeled using LPs with constraints of the form
$\{{A'}x = b, l \leq x \leq u\}$
where the constraint matrix $A'$ is here a node-arc incidence matrix of a directed graph, which is totally unimodular (see~\cite{schrijver-book}). The right-hand-side vectors ($b, l,u$) can be bounded in terms of the total capacity values. Therefore, assuming these are bounded integers, one can rely on Corollary~\ref{cor:num-pivots} to get the result, similarly to (a).

\medskip
One can compute the corresponding pivot sequence for the problems mentioned in (a)-(d)  by running the simplex with the antistalling pivot rule. However, one needs to first solve the LP and obtain an optimal basic solution in order to use it to guide the antistalling pivot rule. 

We remark that the existence of a strongly polynomial sequence of pivots for general min-cost flows (hence for problems in (d)), was known already, as it follows from the fundamental work of Orlin \cite{Orlin97}, that applies even in the case of large capacity values. 
In comparison, our bound is weaker as it requires the capacity values to be bounded. 

Instead, despite being known to be solvable in strongly polynomial time, to the best of our knowledge the problems in (a)-(c) were not previously known to admit strongly polynomial bounds on the number of simplex pivots for their natural LP formulations.

\section{Computational experiments}\label{sec:comp}
In this section we report some computational experiments that we conducted to evaluate the performance of the antistalling pivot rule when applied to actual linear programs. \textcolor{black}{The code is now available at \url{https://github.com/kirill-kukharenko/antistalling}.}

For the implementation, we used the python package CyLP \cite{cylp} which wraps COIN-OR’s CLP solver \cite{coin-or} and provides tools for implementing a preferred pivot rule in python to substitute CLP’s built-in ones. \textcolor{black}{We used two different testsets} of diverse dimensions and sparsity. \textcolor{black}{The first one is} the benchmark Netlib LP dataset \cite{netlib}, \textcolor{black}{containing 108 LPs.} \textcolor{black}{The second one contains the root LPs of the benchmark set of MIPLIB 2017 \cite{miplib2027}, and is composed of 240 instances}. All our experiments were conducted on a laptop with Intel Core i7-13700H 2.40GHz CPU and 32GB RAM.

For each of the aforementioned LPs, we ran the simplex algorithm with our antistalling pivot rule, and in addition with other \textcolor{black}{6} well-known pivot rules (which \cite{cylp} provides implementations for). These are:
\begin{itemize}
\item Dantzig's rule: the entering variable is the one with the most negative reduced cost;
\item Steepest edge: the entering variable is the one which yields a direction $z$ maximizing $\frac{-c^Tz}{||z||_2}$ (i.e., maximizing the improvement normalized according to the 2-norm);
\item LIFO: the entering variable is the one that minimizes the number of iterations that have past since the variable left the basis;
\item Most Frequent: the entering variable is the one that maximizes the number of times it was previously chosen as the entering variable;
\item Bland's rule: the entering variable is the one with the lowest index.
\textcolor{black}{\item Positive edge rule: the entering variable is selected with a heuristic that tries to identify non-degenerate pivots with high probability~\cite{raymond2010positive}.}
\end{itemize}

We tracked the number of pivots required by each of the above rules \textcolor{black}{to solve the LPs to optimality (we did not perturb the instances)}. For our antistalling pivot rule, we guided it using the feasible direction $y=x^\star - x$ at any non-optimal vertex $x$ of the LP, for a pre-computed optimal basic solution $x^*$ (in particular, the one found by Dantzig's pivot rule).
Intuitively, this represents the best direction that can possibly guide our rule, as it is the direction leading immediately to an optimal solution. 
Our objective with the computational experiments is to see whether this choice translates into few pivots in practice, since $y$ might change during the degenerate steps and therefore in reality we do not have control on the actual edge-direction we end up moving along.

\textcolor{black}{We remark that we are here interested in the \emph{number of pivots} required by various pivot rules to solve LPs to optimality, and not in the \emph{computational time} needed to do so. In fact, the CyLP package connects to the solver through a relatively slow Python interface, in contrast to implementing the rules directly into a solver. Therefore, the efficiency of the pivot rule implementations, and hence the number of problems solved by each rule within the time/memory limit, can be heavily affected by this overhead. For a fair comparison, in this section we then only included the instances solved to optimality by all respective pivot rules.}
\textcolor{black}{However, for completeness, detailed tables with all the results of the computational experiments, for both Netlib and MIPLIB testsets, can be found in Appendices \ref{app:netlib} and \ref{app:miplib}, respectively.}

\textcolor{black}{Our first report deals with 84 out of 108 Netlib problems, for which each of the aforementioned 7 pivot rules was able to find an optimal solution within the time limit of 30 minutes and the memory limit of 32GB RAM. For the results, see Figure \ref{fig:netlib}.} 
\textcolor{black}{
Similarly, for MIPLIB this restriction leaves us with 117 out of 240 problems. See Figure \ref{fig:miplib} for the results.}

\begin{figure}
    \centering
    \includegraphics[width=0.95\textwidth]{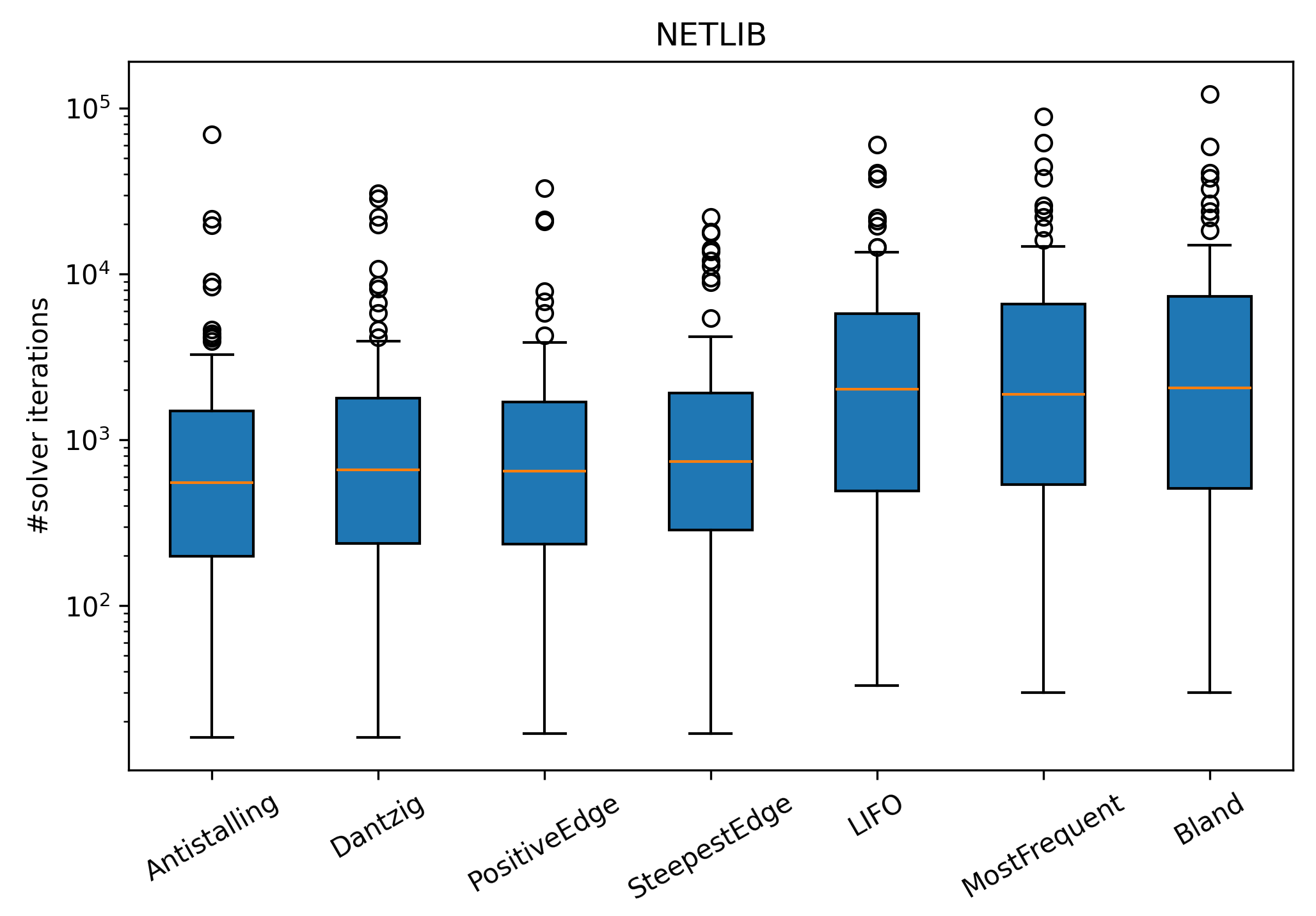}
    \caption{\textcolor{black}{Netlib dataset results. 
Data are presented using standard \emph{box plots,} which  display the distribution of a dataset based on its quartiles. Specifically, the box extends from the first quartile (Q1) to the third quartile (Q3), representing the middle $50\%$ of the data. 
The thin vertical lines (whiskers) extend from the box to the most extreme data points lying within 
1.5 times the interquartile range (Q3-Q1). Points beyond the whiskers are shown individually as outliers (fliers).
Orange horizontal lines denote the medians: 551,  659,   648,  739, 2027,  1890, and 2049, respectively.}}
    \label{fig:netlib}
\end{figure}

\begin{figure}
    \centering
    \includegraphics[width=0.95\textwidth]{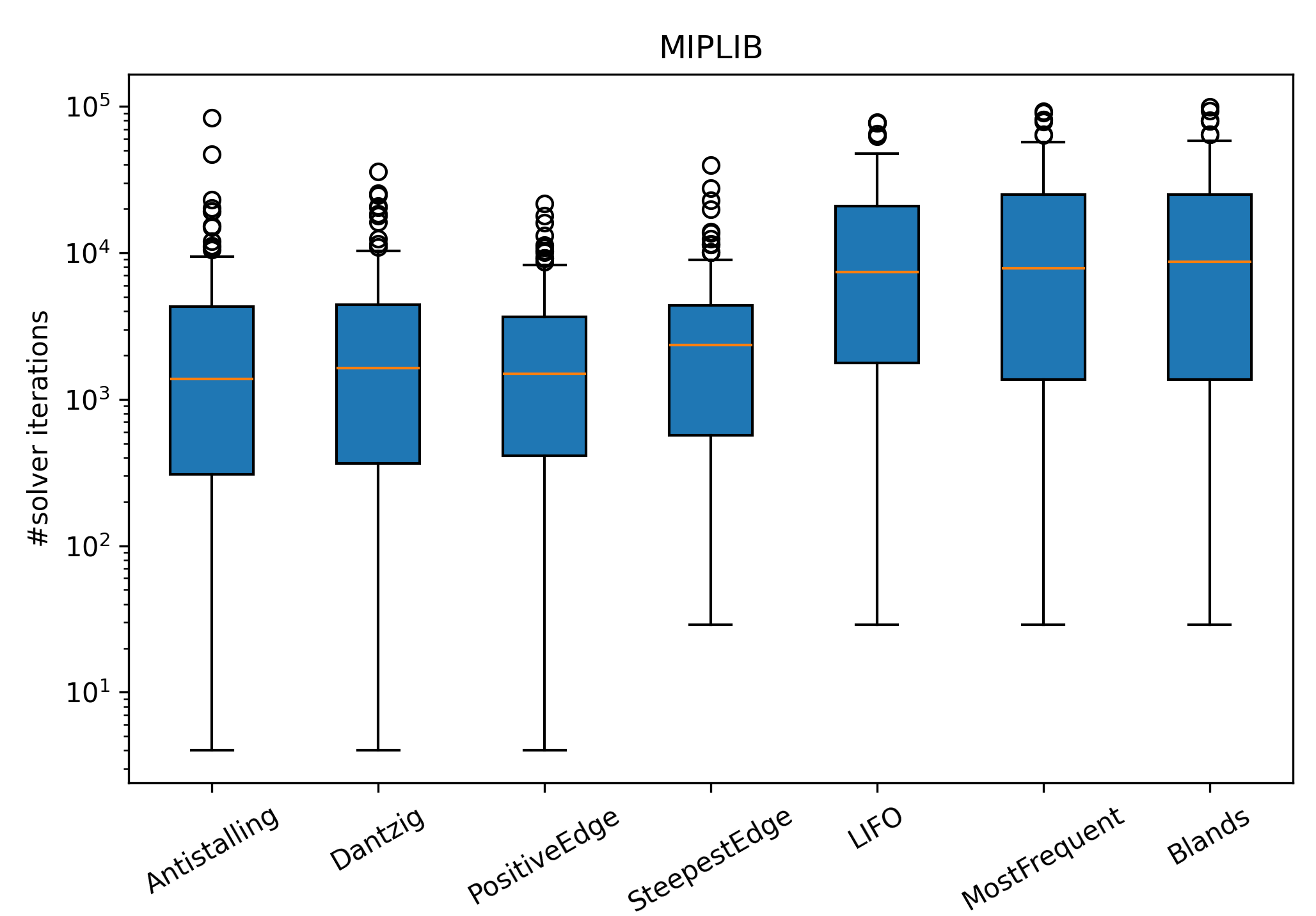}
    \caption{\textcolor{black}{MIPLIB dataset results. Orange lines denote the medians:} 
    \textcolor{black}{1376, 1633, 1493, 2335, 7374, 7829, and 8641,}
    \textcolor{black}{respectively.}}
    \label{fig:miplib}
\end{figure}

The computational experiments show that the antistalling pivot rule, guided by a known optimal solution, actually performs quite well, \textcolor{black}{especially on the Netlib instances.} Most often it manages to find a relatively short sequence of pivots to an optimal basis compared to the other pivot rules.  We highlight \textcolor{black}{for example} the problem\textcolor{black}{s} 25fv47 \textcolor{black}{from Netlib and istanbul-no-cutoff.mps from MIPLIB}, where \textcolor{black}{the antistalling pivot rule required less than one fourth of the number of iterations of any other rule.} On the other hand, there \textcolor{black}{are} instances\textcolor{black}{, e.g., fit2p from Netlib and ic97\_potential.mps from MIPLIB, where the antistalling pivot rule actually showed the worst performance compared to all its competitors. }

\textcolor{black}{
Overall, the results seem to confirm that guidance by an optimal solution indeed often translates into a small number of pivots, motivating further theoretical studies on how such a direction could be approximately followed.
}

\section{Final Remarks}\label{sec:open} 
We would like to conclude the paper with some remarks and potential research directions that originate from our findings.

The results of this work show the existence of a short sequence of degenerate simplex pivots that leads out of degeneracy.
It would be interesting to show that similar results can be achieved through perturbations. Informally, perturbing the right-hand-side of the constraints has the effect of ``splitting'' a degenerate vertex into a set of (possibly exponentially many) distinct vertices, each associated to a distinct basis. 
 It is legitimate to ask whether every new vertex allows for a short monotone  path (where monotonicity is with respect to the linear objective function of the LP), which leads to a vertex out of this set. Note that the existence of the short degeneracy-escaping sequence of simplex pivots showed in this work, generally does not answer the latter question since some bases used in the sequence might become infeasible after perturbation.

Next, we would like to connect the above question to the recent work \cite{kaibelkukh2023}. Given a basis $B$ defining a degenerate vertex $v$, one could imagine to separate $v$ from all its neighbouring vertices by a hyperplane: this way, one would obtain a pyramid (with $v$ at its apex). Suppose this pyramid is full-dimensional and is described by some $Ax \le b$: then, the above question is equivalent to perturbing the side facets of the pyramid to allow for a short path from the new apex defined by the given basis $B$, to any of the vertices in the bottom of the pyramid. This can be done relying on the techniques used for the construction of a so-called rock extension \cite{kaibelkukh2023} (when viewing the base of the pyramid as the $d-1$-dimensional original polytope, and the pyramid as its $d$-dimensional extension). Such rock extension has been proven to have the property that the apex is connected to any bottom vertex by a path of at most linear length. However these paths are not necessarily monotone for a given linear objective function, and moreover, the construction of the rock extension requires not only perturbing the right-hand-side $b$ but the left-hand-side $A$ as well.

Another future research direction, that seems to be supported also by the performed computational experiments, would be to analyze the relation between the  improving feasible direction $y$ that is required by our antistalling pivot rule, and the actual edge-direction $z$ along which one ends up moving after all the degenerate pivots. It would be very interesting to identify some conditions which ensure that $z$ is a good approximation of $y$, e.g., in terms of objective function's improvement.

Furthermore, as it was mentioned in Section \ref{sec:antistalling}, in order for our bound to hold, the only requirement we need to impose when performing a pivot is that the entering variable lies in the support of $y$ (besides of course having negative reduced cost).  In order to obtain the results from Section \ref{sec:applications}, where both degenerate and non-degenerate pivots have to be taken into account, we chose the entering variable to be the one with the most negative reduced cost, i.e., we used Dantzig's rule just restricted to the coordinates in $\supp(y)$. 
Of course, one could think of analyzing the performance of the antistalling pivot rule with other choices of entering coordinates from $\supp(y)$, e.g., according to steepest edge or shadow vertex. Variations of these latter two rules, in particular, have been shown to play a key role in the context of 0/1 polytopes~\cite{black2021}. 

\subsubsection*{Acknowledgements.} The authors are grateful to Volker Kaibel for helpful discussions on the geometric interpretation of the antistalling pivot rule. We thank Alex Black and anonymous reviewers for their helpful comments. The authors express gratitude to the Deutsche Forschungsgemeinschaft (DFG, German Research Foundation) for supporting the first author within 314838170, GRK 2297 MathCoRe and providing travel funds for visiting the second author in Milan. The second author is supported by the NWO VIDI grant VI.Vidi.193.087.

%
%
%
\bibliographystyle{splncs04}
\bibliography{refs_1}
\newpage
\appendix
\section{Detailed Netlib results}\label{app:netlib}

\begin{longtable}{llllllll}
\caption{Netlib LPs. * signifies the instance couldn't be solved by the respective pivot rule within the time and memory limit.} \label{tab:netlib} \\
\toprule
problem & Dantzig & PositiveEdge & LIFO & MostFrequent & SteepestEdge & Bland & Antistalling \\
\midrule
\endfirsthead
\caption[]{Netlib} \\
\toprule
problem & Dantzig & PositiveEdge & LIFO & MostFrequent & SteepestEdge & Bland & Antistalling \\
\midrule
\endhead
\midrule
\multicolumn{8}{r}{Continued on next page} \\
\midrule
\endfoot
\bottomrule
\endlastfoot
25fv47.mps & 8617 & 7892 & 9934 & 10016 & 9530 & 9870 & 1468 \\
80bau3b.mps & 19858 & 20774 & 37525 & 38130 & 17628 & 37965 & 4119 \\
adlittle.mps & 111 & 125 & 197 & 316 & 98 & 326 & 56 \\
afiro.mps & 16 & 17 & 33 & 30 & 17 & 30 & 16 \\
agg.mps & 130 & 134 & 300 & 228 & 228 & 233 & 128 \\
agg2.mps & 216 & 188 & 210 & 194 & 258 & 200 & 228 \\
agg3.mps & 239 & 166 & 317 & 236 & 276 & 249 & 136 \\
bandm.mps & 674 & 770 & 2522 & 2762 & 1283 & 2694 & 674 \\
beaconfd.mps & 104 & 102 & 117 & 117 & 195 & 117 & 89 \\
blend.mps & 92 & 86 & 223 & 299 & 122 & 253 & 82 \\
bnl1.mps & 3290 & 2228 & 6295 & 6408 & 1883 & 6412 & 684 \\
bnl2.mps & 12115 & * & 19314 & * & * & * & 1524 \\
boeing1.mps & 675 & 695 & 2485 & 2570 & 842 & 2726 & 401 \\
boeing2.mps & 174 & 163 & 518 & 550 & 300 & 568 & 230 \\
bore3d.mps & 198 & 199 & 449 & 496 & 291 & 424 & 152 \\
brandy.mps & 353 & 327 & 1595 & 1701 & 1060 & 1620 & 182 \\
capri.mps & 506 & 479 & 561 & 1148 & 416 & 983 & 334 \\
cre-a.mps & 5828 & 3202 & 19458 & 25810 & 14105 & 26623 & 1554 \\
cre-b.mps & 326739 & 31408 & 46413 & 295815 & * & 322064 & 10028 \\
cre-c.mps & 6712 & 3504 & 20850 & 24339 & 17947 & 23915 & 1483 \\
cre-d.mps & 316546 & 22426 & 73494 & 265573 & * & 281915 & 38690 \\
cycle.mps & 5383 & 5152 & * & * & * & * & 4621 \\
czprob.mps & 1905 & 2557 & 14585 & 14724 & 1373 & 14899 & 886 \\
d2q06c.mps & 29315 & 29346 & 30883 & * & * & 31571 & 5661 \\
d6cube.mps & 22028 & 21286 & 21790 & 22154 & 22061 & 21891 & 21569 \\
degen2.mps & 1891 & 1342 & 3480 & 3431 & 729 & 3564 & 627 \\
degen3.mps & 10713 & 5816 & 12145 & 12312 & 2839 & 11822 & 4241 \\
dfl001.mps & 104120 & 99785 & 104924 & 105564 & * & 97847 & 73090 \\
e226.mps & 718 & 561 & 1790 & 1820 & 767 & 1806 & 216 \\
etamacro.mps & 731 & 609 & 2652 & * & * & * & 995 \\
fffff800.mps & 868 & 671 & 4169 & 4922 & 900 & 4943 & 1216 \\
finnis.mps & 611 & 488 & 810 & 1385 & 523 & 1163 & 347 \\
fit1d.mps & 2636 & 2369 & 3301 & 3290 & 2532 & 3377 & 3254 \\
fit1p.mps & 2193 & 1888 & 5656 & 7214 & 1340 & 7246 & 4612 \\
fit2d.mps & 29994 & 31236 & 31841 & 31923 & * & 31880 & 9417 \\
fit2p.mps & 30716 & 33042 & 60330 & 61925 & 11275 & 58919 & 69711 \\
forplan.mps & 243 & 199 & * & 2065 & * & 2055 & 136 \\
ganges.mps & 1628 & 1834 & 2237 & 2521 & 1801 & 2217 & 1728 \\
gfrd-pnc.mps & 1238 & 1030 & 1501 & 1626 & 1062 & 1813 & 801 \\
greenbea.mps & 28646 & 20538 & * & * & * & 33255 & 26251 \\
greenbeb.mps & 30340 & 13041 & 33436 & 35418 & * & * & 30644 \\
grow15.mps & 725 & 647 & 2964 & 2217 & 2455 & 3016 & 1524 \\
grow22.mps & 1038 & 904 & 4612 & 3410 & 5427 & 5263 & 2909 \\
grow7.mps & 306 & 260 & 1157 & 702 & 838 & 941 & 469 \\
israel.mps & 331 & 285 & 453 & 747 & 431 & 887 & 123 \\
kb2.mps & 61 & 65 & 167 & 127 & 70 & 128 & 55 \\
ken-07.mps & 4622 & 3825 & 11384 & 18902 & 3748 & 18336 & 3929 \\
ken-11.mps & 31812 & 29710 & 113471 & 114524 & * & 114656 & 40987 \\
ken-13.mps & 222253 & 91988 & 228878 & 230157 & * & 229959 & * \\
ken-18.mps & * & * & * & * & * & * & * \\
lotfi.mps & 207 & 287 & 1573 & 1558 & 295 & 1653 & 121 \\
maros.mps & 3922 & 2559 & 8837 & 9335 & 3880 & 8738 & 4375 \\
maros-r7.mps & 4144 & 4280 & 40649 & 44360 & 4176 & 40737 & 4144 \\
modszk1.mps & 1125 & 1082 & 4478 & 7128 & 1327 & 7132 & 672 \\
nesm.mps & 4392 & 4392 & * & 11946 & * & * & 1508 \\
osa-07.mps & 600 & 1293 & 2402 & 2205 & 750 & 4504 & 1090 \\
osa-14.mps & 1400 & 2510 & 5553 & 4787 & 1500 & 8807 & 806 \\
osa-30.mps & 2617 & 4925 & 11199 & 14545 & * & 30277 & 1551 \\
osa-60.mps & 5234 & 8905 & 22634 & 149478 & * & 78809 & 2966 \\
pds-02.mps & 1776 & 1668 & 10942 & 16084 & 2099 & 32573 & 19604 \\
pds-06.mps & 28565 & 6813 & 39989 & 88880 & 8904 & 122043 & 8402 \\
pds-10.mps & 122401 & 13140 & 65131 & 154595 & * & 208463 & 18440 \\
perold.mps & 6339 & 5416 & 7558 & * & * & 7697 & 1792 \\
pilot.ja.mps & 5565 & * & 10530 & * & * & * & 1354 \\
pilot.mps & * & * & * & * & * & * & * \\
pilot.we.mps & 8145 & 3862 & 14524 & 14493 & 13676 & 14690 & 9037 \\
pilot4.mps & 1326 & 1636 & 4895 & * & * & * & 1409 \\
pilot87.mps & * & * & * & * & * & * & * \\
pilotnov.mps & 1812 & 2582 & 12539 & 12187 & 12088 & 12530 & 907 \\
recipe.mps & 48 & 48 & 64 & 64 & 48 & 64 & 48 \\
sc105.mps & 103 & 102 & 148 & 120 & 102 & 119 & 108 \\
sc205.mps & 231 & 232 & 294 & 252 & 234 & 228 & 224 \\
sc50a.mps & 49 & 45 & 56 & 53 & 49 & 53 & 47 \\
sc50b.mps & 49 & 50 & 51 & 50 & 49 & 50 & 50 \\
scagr25.mps & 653 & 834 & 3311 & 3042 & 695 & 3081 & 771 \\
scagr7.mps & 140 & 186 & 501 & 361 & 190 & 401 & 154 \\
scfxm1.mps & 457 & 530 & 1826 & 1960 & 613 & 1694 & 537 \\
scfxm2.mps & 993 & 1053 & 4854 & 4990 & 1380 & 5010 & 2380 \\
scfxm3.mps & 1534 & 1575 & 7462 & 7499 & 2015 & 7519 & 1928 \\
scorpion.mps & 364 & 366 & 782 & 444 & 363 & 450 & 382 \\
scrs8.mps & 667 & 809 & 5386 & 5334 & 555 & 5474 & 319 \\
scsd1.mps & 272 & 237 & 2228 & 2753 & 180 & 2697 & 202 \\
scsd6.mps & 620 & 517 & 4913 & 4845 & 602 & 4964 & 237 \\
scsd8.mps & 1918 & 1307 & 10620 & 10652 & 1719 & 10563 & 2037 \\
sctap1.mps & 317 & 356 & 718 & 822 & 368 & 1156 & 191 \\
sctap2.mps & 1225 & 1270 & 1095 & 1352 & 1533 & 2077 & 658 \\
sctap3.mps & 1907 & 1756 & 1482 & 1626 & 2231 & 1586 & 944 \\
seba.mps & 560 & 649 & 508 & 510 & 876 & 507 & 976 \\
share1b.mps & 247 & 290 & 1156 & 1148 & 211 & 947 & 302 \\
share2b.mps & 157 & 131 & 245 & 368 & 156 & 408 & 216 \\
shell.mps & 733 & 744 & 1216 & 1268 & 680 & 1406 & 779 \\
ship04l.mps & 323 & 343 & 1649 & 1637 & 368 & 2022 & 366 \\
ship04s.mps & 313 & 328 & 1028 & 1018 & 346 & 1159 & 363 \\
ship08l.mps & 598 & 628 & 2469 & 2534 & 588 & 3413 & 540 \\
ship08s.mps & 566 & 552 & 1328 & 1335 & 551 & 1628 & 496 \\
ship12l.mps & 1111 & 1017 & 6077 & 6234 & 988 & 8259 & 1152 \\
ship12s.mps & 951 & 914 & 3005 & 3558 & 919 & 2959 & 881 \\
sierra.mps & 1109 & 1134 & 970 & 1029 & 1864 & 1020 & 563 \\
stair.mps & 489 & 507 & 2884 & 800 & 503 & 728 & 1842 \\
standata.mps & 69 & 67 & 118 & 123 & 70 & 127 & 63 \\
standgub.mps & 69 & 67 & 118 & 123 & 70 & 127 & 80 \\
standmps.mps & 333 & 379 & 470 & 829 & 342 & 999 & 208 \\
stocfor1.mps & 79 & 79 & 241 & 268 & 79 & 247 & 79 \\
stocfor2.mps & 2001 & 2212 & 13526 & 13608 & 3591 & 13492 & 1774 \\
tuff.mps & 1655 & 720 & 2965 & 2991 & 701 & 2945 & 1011 \\
vtp.base.mps & 230 & 140 & 381 & 572 & 175 & 513 & 160 \\
wood1p.mps & 665 & 404 & 9584 & 9337 & 3282 & 9208 & 381 \\
woodw.mps & 2401 & 1877 & * & 32532 & * & 31839 & 2491 \\
\end{longtable}

\newpage

\newgeometry{textwidth=17cm}
\section{Detailed MIPLIB results} \label{app:miplib}
\begin{longtable}{llllllll}
\caption{MIPLIB root LPs. * signifies that the instance couldn't be solved by the respective pivot rule within the time and memory limit.} \label{tab:miplib} \\
\toprule
problem & Dantzig & PositiveEdge & LIFO & MostFrequent & SteepestEdge & Blands & Antistalling \\
\midrule
\endfirsthead
\caption[]{MIPLIB}  \\
\toprule
problem & Dantzig & PositiveEdge & LIFO & MostFrequent & SteepestEdge & Blands & Antistalling \\
\midrule
\endhead
\midrule
\multicolumn{8}{r}{Continued on next page} \\
\midrule
\endfoot
\bottomrule
\endlastfoot
30n20b8.mps & 354 & 413 & 29832 & 51208 & 8610 & 58208 & 294 \\
50v-10.mps & 282 & 272 & 1114 & 1037 & 2729 & 1027 & 332 \\
CMS750\_4.mps & 16295 & 13901 & 96361 & 100745 & * & 101308 & 41850 \\
academictimetablesmall.mps & 40401 & 26544 & 171784 & 176255 & * & * & 8027 \\
air05.mps & 18529 & 17937 & 24638 & 25176 & 2531 & 25058 & 19200 \\
app1-1.mps & 2602 & 2488 & 11247 & 22824 & 7809 & 22766 & 3886 \\
app1-2.mps & 26462 & 25375 & * & * & * & * & * \\
assign1-5-8.mps & 314 & 334 & 1235 & 1199 & 496 & 1149 & 239 \\
atlanta-ip.mps & * & 144325 & * & * & * & * & * \\
b1c1s1.mps & 8515 & 7124 & 24663 & 27554 & 13659 & 27456 & 2649 \\
bab2.mps & * & * & * & * & * & * & * \\
bab6.mps & 88009 & 57985 & * & * & * & * & * \\
beasleyC3.mps & 1313 & 1091 & 5691 & 3444 & 2403 & 2813 & 1430 \\
binkar10\_1.mps & 1519 & 1493 & 6279 & 9385 & 1714 & 10168 & 1376 \\
blp-ar98.mps & 750 & 806 & 10794 & 14173 & 2897 & 52747 & 244 \\
blp-ic98.mps & 383 & 353 & 10432 & 15684 & 2125 & 43764 & 90  \\
bnatt400.mps & 2098 & 1085 & 5055 & 24275 & 1128 & 18792 & 2144 \\
bnatt500.mps & 3257 & 1616 & 6261 & 34694 & 1326 & 33572 & 6050 \\
bppc4-08.mps & 267 & 307 & 5373 & 4064 & 3543 & 3194 & 203 \\
brazil3.mps & 224690 & 143249 & 228236 & * & * & * & * \\
buildingenergy.mps & * & * & * & * & * & * & * \\
cbs-cta.mps & 3915 & 4629 & 44014 & 19417 & * & 19132 & 3018 \\
chromaticindex1024-7.mps & 112861 & * & * & * & * & * & * \\
chromaticindex512-7.mps & 53453 & * & * & * & * & * & 24224 \\
cmflsp50-24-8-8.mps & 62624 & 29725 & 66646 & 66879 & * & 66483 & 46395 \\
co-100.mps & 1810 & 1432 & 27194 & 155300 & * & 156246 & 79949 \\
cod105.mps & 5043 & 10413 & 10806 & * & 2350 & * & 3517 \\
comp07-2idx.mps & 36630 & 19996 & 120086 & 140436 & * & 140131 & 124969 \\
comp21-2idx.mps & 17908 & 11221 & 77106 & 90612 & 3935 & 99479 & 83129 \\
cost266-UUE.mps & 1612 & 1782 & 3310 & 3582 & 4375 & 3626 & 1827 \\
cryptanalysiskb128n5obj14.mps & 83012 & 94390 & * & 125246 & * & * & * \\
cryptanalysiskb128n5obj16.mps & 83670 & 91007 & * & 142608 & * & * & * \\
csched007.mps & 1440 & 1902 & 7905 & 8245 & 1777 & 7794 & 603 \\
csched008.mps & 1012 & 1024 & 6211 & 6404 & 1265 & 6246 & 1647 \\
cvs16r128-89.mps & 28014 & 28833 & 26663 & 25872 & * & 28176 & 27912 \\
dano3\_3.mps & 50506 & * & * & * & * & * & * \\
dano3\_5.mps & 50506 & * & * & * & * & * & * \\
decomp2.mps & 2644 & 2574 & 7315 & 7555 & 3674 & 12070 & 5481 \\
drayage-100-23.mps & 4423 & 2050 & 15077 & 27251 & 2049 & 38281 & 6012 \\
drayage-25-23.mps & 4599 & 2189 & 14278 & 30007 & 1812 & 24166 & 6277 \\
dws008-01.mps & 313 & 246 & 1261 & 527 & 430 & 478 & 358 \\
eil33-2.mps & 233 & 241 & 1600 & 2129 & 151 & 2469 & 603 \\
eilA101-2.mps & 3325 & 3514 & 36453 & 79566 & 856 & * & 56899 \\
enlight\_hard.mps & 29 & 29 & 29 & 29 & 29 & 29 & 29 \\
ex10.mps & * & 179784 & * & * & * & * & * \\
ex9.mps & * & 84528 & * & * & * & * & * \\
exp-1-500-5-5.mps & 602 & 1031 & 639 & 639 & 1311 & 639 & 457 \\
fast0507.mps & 27383 & 24272 & 196331 & 197020 & * & 197160 & 22921 \\
fastxgemm-n2r6s0t2.mps & 5691 & 3666 & 17063 & 22611 & 2964 & 19282 & 5782 \\
fhnw-binpack4-4.mps & 665 & 667 & 960 & 625 & 1230 & 615 & 994 \\
fhnw-binpack4-48.mps & 9521 & 7946 & 8830 & 4430 & 19723 & 4406 & 7236 \\
fiball.mps & 6688 & 8910 & 12339 & 79670 & * & 73548 & 4025 \\
gen-ip002.mps & 71 & 71 & 147 & 90 & 86 & 91 & 32 \\
gen-ip054.mps & 61 & 61 & 152 & 104 & 49 & 105 & 30 \\
germanrr.mps & 3879 & 4119 & * & 56995 & * & * & 9180 \\
gfd-schedulen180f7d50m30k18.mps & * & * & * & * & * & * & * \\
glass-sc.mps & 1589 & 2876 & 20767 & 21471 & 499 & 22452 & 610 \\
glass4.mps & 93 & 86 & 304 & 235 & 116 & 187 & 73 \\
gmu-35-40.mps & 1651 & 765 & 2876 & 1947 & 2183 & 2582 & 1494 \\
gmu-35-50.mps & 1955 & 1041 & 3785 & 3745 & 2992 & 4648 & 2405 \\
graph20-20-1rand.mps & 3843 & 2835 & 24081 & * & 2517 & 12187 & 4597 \\
graphdraw-domain.mps & 268 & 309 & 622 & 304 & 386 & 292 & 168 \\
h80x6320d.mps & 364 & 263 & 10938 & 8128 & 6955 & 8835 & 499 \\
highschool1-aigio.mps & * & * & * & * & * & * & * \\
hypothyroid-k1.mps & 10923 & 11179 & 26362 & 15838 & 7999 & 24741 & 4307 \\
ic97\_potential.mps & 634 & 598 & 752 & 514 & 636 & 519 & 1101 \\
icir97\_tension.mps & 3186 & 2145 & 3432 & 4216 & 3587 & 3547 & 3383 \\
irish-electricity.mps & * & * & * & * & * & * & * \\
irp.mps & 225 & 287 & * & 4075 & 139 & 23929 & 191 \\
istanbul-no-cutoff.mps & 11975 & 11678 & 82444 & 86299 & * & 80471 & 2781 \\
k1mushroom.mps & 81426 & 16692 & 18513 & 5144 & * & 5400 & * \\
lectsched-5-obj.mps & 154064 & * & 19026 & 65039 & * & 64952 & 45012 \\
leo1.mps & 1906 & 1042 & 10858 & 22634 & 10125 & 22835 & 275 \\
leo2.mps & 4766 & 2269 & 17501 & 36383 & 12522 & 36488 & 193 \\
lotsize.mps & 1918 & 1785 & 3498 & 3567 & 3148 & 3599 & 2010 \\
mad.mps & 37 & 87 & 97 & 154 & 52 & 226 & 40 \\
map10.mps & 91211 & * & * & * & * & * & * \\
map16715-04.mps & * & * & * & * & * & * & * \\
markshare2.mps & 39 & 39 & 97 & 110 & 62 & 107 & 39 \\
markshare\_4\_0.mps & 20 & 20 & 39 & 42 & 31 & 37 & 19 \\
mas74.mps & 129 & 129 & 516 & 566 & 307 & 563 & 23 \\
mas76.mps & 79 & 79 & 414 & 500 & 110 & 528 & 37 \\
mc11.mps & 1677 & 1894 & 3430 & 2643 & 4578 & 2592 & 1327 \\
mcsched.mps & 4708 & 5116 & 11932 & 11842 & 10099 & 11784 & 5820 \\
mik-250-20-75-4.mps & 75 & 75 & 75 & 75 & 75 & 75 & 75 \\
milo-v12-6-r2-40-1.mps & 3739 & 5212 & 26261 & 15404 & 4622 & 9373 & 2451 \\
momentum1.mps & 12355 & 8999 & * & * & * & * & 11537 \\
mushroom-best.mps & * & 57526 & 10307 & 46970 & 1123 & * & * \\
mzzv11.mps & 39840 & 13789 & 73458 & 88323 & * & 106101 & 3987 \\
mzzv42z.mps & 12448 & 9094 & 62218 & 56988 & 11379 & 80630 & 3614 \\
n2seq36q.mps & 4060 & 2892 & 77923 & 81065 & 3048 & 79541 & 4388 \\
n3div36.mps & 143 & 150 & 2276 & 5299 & 295 & 8577 & 37 \\
n5-3.mps & 1396 & 995 & 11634 & 11481 & 2012 & 11550 & 890 \\
neos-631710.mps & 31810 & 17901 & * & * & * & * & * \\
neos-662469.mps & 30106 & 23253 & 61914 & 63061 & * & 62946 & 50345 \\
neos-787933.mps & 2163 & 2246 & * & 9279 & * & 8700 & 2164 \\
neos-827175.mps & 171542 & * & * & 165678 & * & 165157 & * \\
neos-848589.mps & 2135 & * & * & 13758 & * & * & 2114 \\
neos-860300.mps & 1155 & 893 & 7698 & 7829 & 1321 & 7938 & 224 \\
neos-873061.mps & 110262 & * & * & * & * & * & 33 \\
neos-911970.mps & 129 & 133 & 3183 & 3363 & 214 & 3447 & 92 \\
neos-933966.mps & 158717 & 167025 & 169776 & 155596 & * & 158554 & 109968 \\
neos-950242.mps & 6928 & 7299 & 64874 & 80748 & 2590 & 5921 & 2630 \\
neos-957323.mps & 28302 & 12481 & * & * & * & * & * \\
neos-960392.mps & 9058 & 10795 & 203203 & 205651 & * & 206419 & 8805 \\
neos17.mps & 936 & 767 & 3430 & 1605 & 570 & 1379 & 700 \\
neos5.mps & 105 & 107 & 193 & 122 & 99 & 270 & 7 \\
neos8.mps & 712 & 429 & 4581 & 6209 & 553 & 9684 & 1124 \\
neos859080.mps & 4 & 4 & 93 & 91 & 83 & 90 & 4 \\
neos-1122047.mps & 8685 & 8825 & * & * & * & * & 9201 \\
neos-1171448.mps & 79780 & 71487 & 59813 & 6001 & * & 5843 & 18055 \\
neos-1171737.mps & 25462 & 21642 & 21740 & 2542 & 22777 & 2630 & 22927 \\
neos-1354092.mps & 90484 & 88911 & 67471 & 67230 & * & 61909 & 15903 \\
neos-1445765.mps & 6122 & 6555 & 28656 & 31982 & 10084 & 46889 & 5772 \\
neos-1456979.mps & 35773 & 8662 & 30198 & 35218 & 2142 & 35123 & 3378 \\
neos-1582420.mps & 3670 & 2964 & 47584 & 63690 & 6191 & 64109 & 4293 \\
neos-2075418-temuka.mps & * & * & * & * & * & * & * \\
neos-2657525-crna.mps & 360 & 200 & 627 & 699 & 305 & 678 & 292 \\
neos-2746589-doon.mps & 120878 & 36276 & * & 271882 & * & * & 14163 \\
neos-2978193-inde.mps & 928 & 2709 & 37226 & 64529 & 2335 & 64472 & 833 \\
neos-2987310-joes.mps & 14222 & 13281 & 23023 & 25572 & * & 26282 & 15458 \\
neos-3004026-krka.mps & 6660 & 6660 & 6305 & 6305 & * & 6305 & 11610 \\
neos-3024952-loue.mps & 3645 & 3863 & 13076 & 4545 & 8948 & 5767 & 3073 \\
neos-3046615-murg.mps & 169 & 169 & 235 & 235 & 455 & 235 & 169 \\
neos-3083819-nubu.mps & 2678 & 2409 & 8224 & 11544 & 2240 & 15352 & 1681 \\
neos-3216931-puriri.mps & * & 34396 & 45197 & 45797 & 9782 & 50236 & * \\
neos-3381206-awhea.mps & 1584 & 1774 & 2196 & 2312 & 1978 & 2227 & 2002 \\
neos-3402294-bobin.mps & 1066 & 1023 & 3374 & * & 461 & * & 1134 \\
neos-3402454-bohle.mps & 5481 & 3457 & * & * & * & * & * \\
neos-3555904-turama.mps & 6076 & 1607 & * & * & * & * & 4329 \\
neos-3627168-kasai.mps & 1633 & 1900 & 4127 & 3905 & 5382 & 5776 & 1217 \\
neos-3656078-kumeu.mps & 20525 & 17539 & 113007 & 110161 & * & 109498 & 9260 \\
neos-3754480-nidda.mps & 291 & 430 & 2202 & 974 & 635 & 1035 & 231 \\
neos-3988577-wolgan.mps & 40469 & 24658 & * & * & * & * & * \\
neos-4300652-rahue.mps & 85331 & 17034 & 48186 & * & * & * & 26589 \\
neos-4338804-snowy.mps & 683 & 692 & 441 & 441 & 792 & 441 & 1681 \\
neos-4387871-tavua.mps & 2292 & 2471 & 9202 & 6459 & 3000 & 26601 & 3427 \\
neos-4413714-turia.mps & 27839 & 10821 & * & * & * & * & 16425 \\
neos-4532248-waihi.mps & 3191 & 982 & * & * & * & * & 7981 \\
neos-4647030-tutaki.mps & 11605 & 12066 & 18514 & 18452 & * & 18167 & * \\
neos-4722843-widden.mps & 14983 & 10825 & * & 30597 & * & 37043 & * \\
neos-4738912-atrato.mps & 4027 & 5190 & 26204 & 26796 & 27602 & 26215 & 5533 \\
neos-4763324-toguru.mps & 302123 & 71563 & * & * & * & * & * \\
neos-4954672-berkel.mps & 962 & 965 & 3574 & 3792 & 3055 & 4385 & 670 \\
neos-5049753-cuanza.mps & * & * & * & * & * & * & * \\
neos-5052403-cygnet.mps & * & * & * & * & * & * & * \\
neos-5093327-huahum.mps & 19617 & 20575 & * & * & * & * & * \\
neos-5104907-jarama.mps & * & * & * & * & * & * & * \\
neos-5107597-kakapo.mps & 6191 & 6969 & 14446 & 11175 & * & 10175 & 8147 \\
neos-5114902-kasavu.mps & * & * & * & * & * & * & * \\
neos-5188808-nattai.mps & 26125 & 12571 & 24862 & 46541 & * & * & 9429 \\
neos-5195221-niemur.mps & 43996 & 14654 & 21534 & * & * & * & 13985 \\
net12.mps & 3929 & 5043 & 29848 & 92171 & 4716 & 92870 & 1160 \\
netdiversion.mps & * & * & * & * & * & * & * \\
nexp-150-20-8-5.mps & 661 & 650 & 2707 & 5219 & 11536 & 6470 & 845 \\
ns1116954.mps & 5600 & 12858 & 441155 & 24617 & * & 81870 & 19024 \\
ns1208400.mps & 3263 & 2398 & 25447 & 25859 & 2281 & 26724 & 4545 \\
ns1644855.mps & * & * & * & * & * & * & * \\
ns1760995.mps & * & 31060 & * & * & * & * & * \\
ns1830653.mps & 18120 & 16096 & 18666 & 19139 & 4744 & 19311 & 11198 \\
ns1952667.mps & 68 & 68 & 11145 & 18491 & 87 & 24957 & 68 \\
nu25-pr12.mps & 1850 & 1642 & 6699 & 26414 & 3099 & 26281 & 6165 \\
nursesched-medium-hint03.mps & 156192 & 36390 & 160684 & 169581 & * & 168361 & 35869 \\
nursesched-sprint02.mps & 16279 & 8259 & 47487 & 46400 & 3100 & 46185 & 46907 \\
nw04.mps & 438 & 313 & 12328 & 56821 & 130 & * & 94 \\
opm2-z10-s4.mps & * & * & 28984 & * & * & * & * \\
p200x1188c.mps & 578 & 579 & 1603 & 1061 & 1177 & 1018 & 1244 \\
peg-solitaire-a3.mps & 11502 & 10055 & 31045 & 30794 & 6902 & 30915 & 19201 \\
pg.mps & 1566 & 1567 & 3628 & 3730 & 8714 & 4477 & 1552 \\
pg5\_34.mps & 1684 & 1999 & 8040 & 8664 & 3631 & 8641 & 3022 \\
physiciansched3-3.mps & * & * & * & * & * & * & * \\
physiciansched6-2.mps & * & * & * & * & * & * & * \\
piperout-08.mps & 10035 & 8071 & 77316 & 78677 & 13883 & 79658 & 10936 \\
piperout-27.mps & 13212 & 10491 & 93333 & 97329 & * & 98047 & 19730 \\
pk1.mps & 56 & 56 & 234 & 256 & 229 & 220 & 56 \\
proteindesign121hz512p9.mps & 963 & 905 & 13935 & 19537 & * & * & 1311 \\
proteindesign122trx11p8.mps & 606 & 652 & 11270 & 10568 & 2918 & 10460 & 713 \\
qap10.mps & 43368 & * & 47037 & 42295 & * & 51073 & 8507 \\
radiationm18-12-05.mps & 40126 & 50952 & * & * & * & * & * \\
radiationm40-10-02.mps & 163087 & * & * & * & * & * & * \\
rail01.mps & * & * & * & * & * & * & * \\
rail02.mps & * & * & * & * & * & * & * \\
rail507.mps & 24907 & 20067 & 195912 & 196516 & * & 197922 & 23777 \\
ran14x18-disj-8.mps & 1840 & 1657 & 3587 & 3867 & 3889 & 3821 & 2611 \\
rd-rplusc-21.mps & 475 & 412 & 342 & 913 & 640 & 840 & 517 \\
reblock115.mps & 2456 & 5185 & 11934 & 2566 & 4122 & 2627 & 3220 \\
rmatr100-p10.mps & 3409 & 4706 & 35091 & 15813 & 1672 & 17698 & 501 \\
rmatr200-p5.mps & 65929 & 85034 & * & * & * & * & 309 \\
rocI-4-11.mps & 9498 & 13144 & 46230 & 54614 & 7263 & 42979 & 20292 \\
rocII-5-11.mps & 6064 & 1315 & 3874 & 2342 & 1884 & 2382 & 1615 \\
rococoB10-011000.mps & 20776 & 1353 & 21938 & 21425 & 3977 & 21581 & 11982 \\
rococoC10-001000.mps & 3941 & 1511 & 7374 & 13936 & 5032 & 14324 & 522 \\
roi2alpha3n4.mps & 24697 & 10346 & 6264 & 32500 & 39804 & 10864 & 9391 \\
roi5alpha10n8.mps & 12459 & 6064 & 13999 & * & * & 132274 & * \\
roll3000.mps & 1815 & 2155 & 12717 & 13269 & 1892 & 13997 & 3166 \\
s100.mps & * & * & * & * & * & * & * \\
s250r10.mps & * & * & * & * & * & * & * \\
satellites2-40.mps & 73896 & 51346 & 19366 & 213784 & * & 21851 & 2498 \\
satellites2-60-fs.mps & 47644 & 41964 & 18033 & 181057 & * & 16223 & 2921 \\
savsched1.mps & * & * & * & * & * & * & * \\
sct2.mps & 6949 & 2983 & 27287 & 26895 & 7397 & 27005 & 3571 \\
seymour.mps & 7345 & 9261 & 22067 & 24986 & 2502 & 24940 & 15189 \\
seymour1.mps & 7345 & 9261 & 22067 & 24986 & 2502 & 24940 & 15189 \\
sing44.mps & 261483 & * & * & * & * & * & * \\
sing326.mps & 220981 & * & * & * & * & * & * \\
snp-02-004-104.mps & * & * & * & * & * & * & * \\
sorrell3.mps & 1397 & 1356 & 1772 & 1369 & 1335 & 1368 & 1736 \\
sp150x300d.mps & 318 & 310 & 577 & 479 & 484 & 475 & 307 \\
sp97ar.mps & 3950 & 3573 & 40951 & 50077 & 2571 & 49995 & 14935 \\
sp98ar.mps & 20342 & 10799 & 39116 & 52886 & 2519 & 52395 & 1967 \\
splice1k1.mps & 35351 & 17482 & 34404 & 48443 & * & 32872 & 16224 \\
square41.mps & * & * & * & * & * & * & * \\
square47.mps & * & * & * & * & * & * & * \\
supportcase10.mps & * & * & * & * & * & * & * \\
supportcase12.mps & * & * & * & * & * & * & * \\
supportcase18.mps & 1787 & 1077 & 22628 & 27455 & 652 & 37010 & 499 \\
supportcase19.mps & * & * & * & * & * & * & * \\
supportcase22.mps & * & 2431 & * & * & * & * & * \\
supportcase26.mps & 294 & 313 & 365 & 409 & 450 & 409 & 540 \\
supportcase33.mps & 4216 & 3176 & 44224 & 140227 & * & 87128 & 5053 \\
supportcase40.mps & 100160 & 38705 & * & 175363 & * & 181878 & 21721 \\
supportcase42.mps & * & * & * & * & * & * & * \\
supportcase6.mps & 9576 & 3895 & * & * & * & * & 931 \\
supportcase7.mps & 154110 & 24378 & * & * & * & * & * \\
swath1.mps & 640 & 1049 & 19946 & 23793 & 281 & 23712 & 479 \\
swath3.mps & 640 & 1049 & 19946 & 23793 & 281 & 23712 & 433 \\
tbfp-network.mps & 42643 & 11572 & 244825 & 237023 & * & 237077 & 3225 \\
thor50dday.mps & 38456 & * & * & * & * & * & * \\
timtab1.mps & 328 & 250 & 744 & 812 & 502 & 751 & 175 \\
tr12-30.mps & 696 & 1321 & 3304 & 5668 & 4135 & 5688 & 696 \\
traininstance2.mps & 13397 & 13713 & 21531 & 21904 & * & 32415 & 15256 \\
traininstance6.mps & 10270 & 10282 & 16929 & 15868 & 11367 & 52302 & 10512 \\
trento1.mps & 57604 & 74004 & 40969 & 40642 & * & 40722 & 34730 \\
triptim1.mps & * & * & * & * & * & * & * \\
uccase12.mps & * & * & * & * & * & * & * \\
uccase9.mps & 116722 & * & * & * & * & * & * \\
uct-subprob.mps & 9495 & 7117 & 14877 & 16206 & 2416 & 16033 & 9294 \\
unitcal\_7.mps & 150003 & 84117 & * & * & * & * & 31193 \\
var-smallemery-m6j6.mps & 59701 & 59539 & 28200 & 60575 & * & 61423 & * \\
wachplan.mps & 4655 & 5722 & 17255 & 17440 & 2080 & 16360 & 6012 \\
\end{longtable}
\restoregeometry






\end{document}